\newtheorem{theo}{Theorem}[section]
{\theorembodyfont{\rm} \newtheorem{defi}[theo]{Definition}}
{\theorembodyfont{\rm} }
{\theorembodyfont{\rm} }
{\theorembodyfont{\rm} \newtheorem{notation}[theo]{Notation}}
{\theorembodyfont{\rm} }
\newtheorem{prop}[theo]{Proposition}
\newtheorem{cor}[theo]{Corollary}
\newtheorem{lemma}[theo]{Lemma}
\newenvironment{proof}{{\sc Proof:}}{\mbox{}\hfill$\Box$\par}
\newcommand{\eqr}[1]{~\mbox{$(${\rm \ref{#1}}$)$}}
\newcommand{\junk}[1]{}
\newcommand{\N}{{\mathbb N}}
\newcommand{\F}{{\mathbb F}}
\newcommand{\K}{{\mathbb K}}
\newcommand{\C}{{\mathcal C}}
\newcommand{\wt}{{\rm wt}}
\newcommand{\rank}{{\rm rank}\,}
\newcommand{\zwei}[2]{\left[ \begin{array}{c}
                   #1 \\ #2 \end{array} \right]}
\newcounter{abc}
\title{The Existence of Strongly-MDS Convolutional Codes\footnote
   {This work was supported in part by NSF grants DMS-00-72383 and
    CCR-02-05310.}
       }
\author{
  Ryan Hutchinson\\
  {\small Department of Mathematics and Computer Science}\vspace{-2mm}\\
  {\small Bemidji State University}\vspace{-2mm}\\
  {\small {\em e-mail:} rhutchinson@bemidjistate.edu}
  }
\begin{document}
\maketitle
\begin{abstract}
It is known that maximum distance separable and maximum distance
profile convolutional codes exist over large enough finite fields of
any characteristic for all parameters $(n,k,\delta )$.  It has been
conjectured that the same is true for convolutional codes that are
strongly maximum distance separable. Using methods from linear
systems theory, we resolve this conjecture by showing that, over a
large enough finite field of any characteristic, codes which are
simultaneously maximum distance profile and strongly maximum
distance separable exist for all parameters $(n,k,\delta )$.\\

\noindent {\bf Keywords:} MDS codes, convolutional codes, column
distances, linear systems, minimal partial realization problem.
\end{abstract}

\section{Introduction}
In recent literature on convolutional codes, several new classes of
codes with optimal distance properties have been introduced.  These
classes of codes are known as maximum distance separable (MDS)
codes, maximum distance profile (MDP) codes, and strongly MDS (sMDS)
codes.  MDS codes are characterized by the property that they have
the maximum possible free distance for a given choice of code
parameters. sMDS codes are a subclass of MDS codes having the
property that this maximum possible free distance is attained at the
earliest possible encoding step. MDP codes are characterized by the
property that their column distances grow at the maximum possible
rate for a given choice of code parameters.

In~\cite{ro99a1}, it is shown that MDS convolutional codes exist for
all parameters $(n,k,\delta )$ over sufficiently large finite
fields; in~\cite{12}, a similar result is obtained for codes having
the MDP property. In~\cite{gl03r}, sMDS convolutional codes are
introduced and studied, and they are shown to exist for parameters
$(n,k,\delta )$ satisfying $(n-k) \mid \delta$. In addition, it is
conjectured that convolutional codes possessing the MDP and sMDS
properties together exist for all $(n,k,\delta )$. In this work, we
show that this conjecture is correct. The approach used is
systems-theoretic in nature; to obtain the proof, we make use of the
well-known interpretation of a convolutional code as an
input-state-output linear system as well as results from partial
realization theory.

The structure of this paper is as follows.  In Section 2, we review
relevant ideas from the theory of convolutional codes.  We recall as
well a connection between convolutional codes and input-state-output
linear systems that we will use to obtain our results. In Section 3,
we use a linear systems representation of convolutional codes to
give a characterization of the sMDS property.  In Section 4, we use
this characterization to show the existence, for all parameters
$(n,k,\delta )$, of codes possessing both the MDP and sMDS
properties.

\section{Convolutional Codes and Linear Systems}
In this section, we recall some facts about convolutional codes and
their connection with linear systems.  Throughout this paper, $0$
will be understood to be the zero matrix or vector of the
appropriate size.  Let $k$ and $n$ be positive integers with $k<n$,
$p$ a prime number, $\K$ the algebraic closure of the prime field
$\F _p$, and $\F$ a finite subfield of $\K$.
\begin{defi}
A {\em convolutional code} $\mathcal{C}$ of {\em rate} $k/n$ is a
rank-$k$ direct summand of $\F [s]^n$.
\end{defi}
$\mathcal{C}$ is a free $\F [s]$-module and may thus be viewed as
the column space of a full-rank matrix $G(s)\in \F [s]^{n\times k}$,
called a {\em generator matrix} for $\mathcal{C}$. Two full-rank
$n\times k$ matrices $G_1(s)$ and $G_2(s)$ generate the same code if
and only if there exists a unimodular matrix $U(s)\in \F
[s]^{k\times k}$ such that $G_1(s)=G_2(s)U(s)$.

When convenient, we will (at times with a slight abuse of notation)
make use of the fact that $\F [s]^n$ and $\F ^n[s]$ are isomorphic
$\F [s]$-modules and think of codewords as elements of $\F ^n[s]$.
 For example, the columns of a generator matrix $G(s)$ may be thought
of as polynomials with coefficients in $\F ^n$; we refer to the
degrees of these polynomials as the {\em column degrees} of $G(s)$
and denote the degree of the $j$th column by $\delta _j$. The {\em
high-order coefficient matrix} of $G(s)$, $G_{\infty}$, is the
matrix whose $j$th column is the column coefficient of $s^{\delta
_j}$ in the $j$th column of $G(s)$. In general, $G_{\infty}$ need
not have full rank. It is always possible, though, to find a
unimodular matrix $U(s)\in \F [s]^{k\times k}$ such that $G(s)U(s)$
has a full-rank high-order coefficient matrix (see~\cite{17}).  If
$G_{\infty}$ has full rank, then $G(s)$ is called a {\em minimal
generator matrix}.

An important invariant of a convolutional code is its degree,
defined as follows:
\begin{defi}
The {\em degree} $\delta$ of a convolutional code $\mathcal{C}$ is
the maximal degree of a (polynomial) determinant of a $k\times k$
submatrix of a generator matrix of $\mathcal{C}$.
\end{defi}
This definition makes sense, as multiplication by a unimodular
matrix preserves the degrees of such determinants.  We note that, if
$G(s)$ is a minimal generator matrix of $\mathcal{C}$ with column
degrees $\delta _1,\ldots ,\delta _k$, then $\delta =\sum _{j=1}^{k}
\delta _j$.  A code of rate $k/n$ and degree $\delta$ will be
referred to as an $(n,k,\delta)$-code.

We turn next to notions of distance.  We first recall the definition
of Hamming weight:
\begin{defi}
  Let $v\in \F ^n$ and $v(t):=\sum _{t=0}^{d} v_ts^t \in \F ^n[s]$.
  The {\em Hamming weight of $v$}, wt($v$), is the number
  of nonzero components of $v$.  The Hamming weight of $v(t)$ is $\text{wt}(v(t)):=\sum _{t=0}^{d} \text{wt}(v_t)$.
\end{defi}
For the purpose of error control coding, it is important that the
minimum weight among the codewords of a code be as large as
possible.  This leads to the concept of free distance:
\begin{defi}
The {\em free distance} of a convolutional code $\mathcal{C}$ is
\begin{equation*}
d_{free}(\mathcal{C}):=\min \{\wt(v(t))\,
\vline\,\,v(t)\in\mathcal{C} \backslash \{ 0 \}\} .
\end{equation*}
\end{defi}
Column distances also play an important part in what follows. They
measure the minimum possible distance between truncated codewords:
\begin{defi}
Let $\mathcal{C}$ be a convolutional code.  For $j\in \N _{0}$, the
{\em $j$th column distance} of $\mathcal{C}$ is
$$
d_j^c(\mathcal{C}):=\min \left\{\sum_{t=0}^j\wt(v_t)\,\vline\,
v(t)\in \mathcal{C}\,\, \text{and}\,\, v_0\neq 0_{n} \right\},
$$
where $v_j=0_n$ if $j>\text{deg }v(t)$.
\end{defi}
The following result gives upper bounds for the column distances and
the free distance of a convolutional code.
\begin{prop}                  \label{P-dcj.bound}
Let $\mathcal{C}$ be an $(n,k,\delta)$-code.
\begin{enumerate}
\item For every $j\in \N _{0}$,
\[
d_j^c(\mathcal{C})\leq(n-k)(j+1)+1.
\]
If $d_j^c(\mathcal{C})=(n-k)(j+1)+1$ for some $j$, then
$d_i^c(\mathcal{C})=(n-k)(i+1)+1$ when $i\in \{0,\ldots ,j \}$.
\item \begin{equation*}                                \label{G-Singleton}
d_{free}(\mathcal{C})\leq
(n-k)\Big(\Big\lfloor\frac{\delta}{k}\Big\rfloor+1\Big) +\delta+1.
\end{equation*}
\end{enumerate}
\end{prop}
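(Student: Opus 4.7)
My approach is based on a covering-radius construction for part 1 and a subcode Singleton argument for part 2.

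For part 1's upper bound, I would fix a minimal generator matrix $G(s)=G_0+G_1s+\cdots$ of $\mathcal{C}$ with $G_0$ of full column rank $k$ (achievable by a unimodular change, making the encoder delay-free). I would then build a codeword $v=Gu$ greedily: choose $u_0\ne 0$ to minimize $\wt(v_0)=\wt(G_0u_0)$, giving $\wt(v_0)\le d_{\min}(\mathrm{col}(G_0))\le n-k+1$ by the Singleton bound on the $[n,k]$ code $\mathrm{col}(G_0)$; then for $t=1,\dots,j$ choose $u_t$ to minimize $\wt(v_t)=\wt(G_0u_t+(\text{fixed part from earlier choices}))$, which equals the distance from the fixed part to $\mathrm{col}(G_0)$ and is at most $n-k$ by the Singleton-type covering-radius bound for $[n,k]$ codes. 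Summing produces a codeword with $v_0\ne 0$ and $\sum_{t=0}^j\wt(v_t)\le(n-k+1)+j(n-k)=(n-k)(j+1)+1$, hence $d_j^c(\mathcal{C})\le(n-k)(j+1)+1$.

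For the cascade, I would prove the sharper inequality $d_j^c(\mathcal{C})\le d_i^c(\mathcal{C})+(n-k)(j-i)$ for $0\le i<j$; combined with the upper bound at level $i$, this forces $d_i^c=(n-k)(i+1)+1$ whenever $d_j^c=(n-k)(j+1)+1$. The inequality follows by taking a codeword $v$ with $v_0\ne 0$ attaining $d_i^c$ and, keeping the input coefficients $u_0,\dots,u_i$ unchanged (so $v_0,\dots,v_i$ are preserved), extending via the same greedy rule: at each $t=i+1,\dots,j$ choose $u_t$ so that $\wt(v_t)\le n-k$, yielding a codeword with $v_0$ nonzero and truncated weight at most $d_i^c+(n-k)(j-i)$.

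For part 2, I would consider the subspace $\mathcal{C}_{\le q}\subseteq\mathcal{C}$ of codewords of polynomial degree at most $q:=\lfloor\delta/k\rfloor$. With a minimal encoder of Forney indices $\delta_1\le\cdots\le\delta_k$ summing to $\delta$, the dimension equals $\sum_{i:\delta_i\le q}(q-\delta_i+1)$; a counting argument (exploiting $\sum_{i:\delta_i>q}\delta_i\ge(q+1)|\{i:\delta_i>q\}|$) shows $\dim\mathcal{C}_{\le q}\ge k(q+1)-\delta\ge 1$. Viewing $\mathcal{C}_{\le q}$ as a subspace of $\F^{n(q+1)}$ and applying the Singleton bound produces a nonzero codeword of Hamming weight at most $n(q+1)-[k(q+1)-\delta]+1=(n-k)(q+1)+\delta+1$, the required bound on $d_{free}(\mathcal{C})$.

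The main technical subtlety I anticipate is securing a minimal generator with $G_0$ of full column rank, since the whole covering-radius construction underlying part 1 (both the upper bound and the cascade) rests on this structural choice. Once that setup is in place, the remaining work consists of clean applications of the Singleton bound (to $\mathrm{col}(G_0)$ or to the subcode $\mathcal{C}_{\le q}$) together with the elementary covering-radius estimate $n-k$.
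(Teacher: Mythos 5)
The paper does not itself prove this proposition: it cites~\cite{gl03r} for statement~1 and~\cite{ro99a1} for statement~2, so there is no in-paper proof to compare against. Your argument is correct and self-contained, and it follows the standard routes. For statement~2, the subcode argument---restrict to $\mathcal{C}_{\le q}$ with $q=\lfloor\delta/k\rfloor$, compute $\dim\mathcal{C}_{\le q}=\sum_{j:\delta_j\le q}(q-\delta_j+1)\ge k(q+1)-\delta\ge 1$ from the predictable degree property of a minimal encoder, and apply the block Singleton bound inside $\F^{n(q+1)}$---is essentially the argument of~\cite{ro99a1}. For statement~1, the greedy construction is clean: $\wt(v_0)\le n-k+1$ from Singleton applied to the $[n,k]$ code spanned by the columns of $G_0$, and each later coefficient can be driven to weight $\le n-k$ because the covering radius of an $[n,k]$ linear code is at most $n-k$ (a parity-check matrix of rank $n-k$ realizes every syndrome as a combination of $n-k$ fixed independent columns). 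The auxiliary inequality $d_j^c\le d_i^c+(n-k)(j-i)$, obtained by freezing $u_0,\dots,u_i$ at a minimizer for $d_i^c$ and extending greedily, combined with the level-$i$ upper bound, forces the full cascade; this works because the map $(u_0,\dots,u_i)\mapsto(v_0,\dots,v_i)$ is injective when $G_0$ has full column rank, so an optimizer for $d_i^c$ really does fix the first $i+1$ inputs.

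One correction concerning what you call the ``main technical subtlety.'' Securing $G_0$ of full column rank is automatic here, not achieved by a unimodular change, and the claim that it is ``achievable by a unimodular change'' is false for general polynomial generator matrices. Since $\mathcal{C}$ is by definition a direct summand of $\F[s]^n$, every generator matrix $G(s)$ has coprime $k\times k$ minors, hence $G(a)$ has rank $k$ for every $a\in\K$; in particular $G_0=G(0)$ has full column rank for every generator matrix, minimal or not. The paper invokes exactly this fact in the proof of the proposition that immediately follows (``Because $\mathcal{C}$ is a summand of $\F[s]^n$, $G_0$ has full rank''). By contrast, for a code that is not a summand no unimodular change need produce a delay-free encoder: e.g.\ with $G(s)=\left[\begin{smallmatrix}1&0\\0&s\\0&0\end{smallmatrix}\right]$, every $G(s)U(s)$ with $U$ unimodular still has constant coefficient of rank $1$. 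So it is the summand hypothesis, not a unimodular reduction, that removes the subtlety; your construction is sound once this is noted.
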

Statement 1 is proved in~\cite{gl03r}, and statement 2 is proved
in~\cite{ro99a1}.  The bound in 2 is called the {\em generalized
Singleton bound}.

Set $L:=\Big\lfloor\frac{\delta}{k}
\Big\rfloor+\Big\lfloor\frac{\delta}{n-k}\Big\rfloor$ and
$M:=\Big\lfloor\frac{\delta}{k}\Big\rfloor+
\Big\lceil\frac{\delta}{n-k}\Big\rceil$.  We are now ready to define
the code properties of interest in this work:
\begin{defi}\label{DistProp}
Let $\mathcal{C}$ be an $(n,k,\delta)$-code.  Then,
\begin{enumerate}
\item $\mathcal{C}$ is called a {\em maximum distance profile code} ({\em MDP
code}) if
$$
d_L^c(\mathcal{C})=(n-k)(L+1)+1.
$$
\item $\mathcal{C}$ is called a {\em maximum distance separable code} ({\em MDS code}) if
\[
d_{free}(\mathcal{C})=(n-k)\Big(\Big\lfloor\frac{\delta}{ k}
\Big\rfloor+1\Big)+\delta+1.
\]
\item $\mathcal{C}$ is called a {\em strongly MDS code} ({\em sMDS code}) if
\[
d_M^c(\mathcal{C})=(n-k)\Big(\Big\lfloor\frac{\delta}{k}
\Big\rfloor+1\Big)+\delta+1.
\]
\end{enumerate}
\end{defi}

Using the fact that no column distance of $\mathcal{C}$ can exceed
the generalized Singleton bound, one can show that $L$ is the
largest possible value of $j$ for which $d_j^c(\mathcal{C})$ can
attain the upper bound in statement 1 of
Proposition~\ref{P-dcj.bound}. If $\mathcal{C}$ is an MDP code,
then, by Proposition~\ref{P-dcj.bound}, $d_i^c(\mathcal{C})$ attains
this upper bound when $i\in \{0,\ldots ,L \}$.  Thus, 1 says that
the column distances of an MDP code are maximal until it is no
longer possible. Similarly, one can show that, if $j<M$, then
$d_j^c(\mathcal{C})<(n-k)\Big(\Big\lfloor\frac{\delta}{k}\Big\rfloor+1\Big)+\delta+1$.
 Thus, 3 says that, for an sMDS code, the sequence
$\{d_j^c(\mathcal{C})\} _{j\geq 0}$ attains the generalized
Singleton bound at the smallest possible value of $j$.

In the second part of this section, we introduce a connection
between convolutional codes and linear systems that we will use to
obtain our results. Background information for this discussion and
applications of ideas from systems theory to the construction of
convolutional codes may be found, for example, in~\cite{Antsaklis,
8,ro96a1, ro99a, 7}.

Let $A \in \K^{\delta
  \times \delta}, \,\,\, B \in \K^{\delta \times k}, \,\,\, C \in
\K^{(n-k) \times \delta}$, and $D \in \K^{(n-k) \times k}$.  Note
that, since the number of entries in the matrices $(A,B,C,D)$ is
finite, these matrices are actually defined over a finite subfield
$\F$ of $\K$. The matrices $(A,B,C,D)$ describe a time-invariant
linear system through the equations
\begin{eqnarray}  \label{iso}
x_{t+1} & = & Ax_t+Bu_t, \nonumber \\  y_t & = & Cx_t+Du_t,\\
x_0&=&0,\nonumber
\end{eqnarray}
where $x_t \in \F ^{\delta}$, $u_t \in \F ^k$, and $y_t \in \F
^{n-k}$ are called the {\em state vector}, {\em input vector}, and
{\em output vector} at time $t$, respectively.  The matrix quadruple
$(A,B,C,D)$ is called a {\em realization} for the system. We recall
the following well-known definition:
\begin{defi}\label{DefB}
$(A,B)$ is called a {\em reachable pair} if
$$
  \rm{rank} \left(\left[\begin{array}{ccccc}
                    B & AB & \cdots & A^{\delta -2}B & A^{\delta -1}B
                 \end{array}
           \right]\right)=\delta.
$$
$(A,C)$ is called an {\em observable pair} if
$$
  \rm{rank} \left(\left[\begin{array}{ccccc}
                      C^T & (CA) ^T & \cdots & (CA^{\delta -2})^T & (CA^{\delta
                      -1})^T
                   \end{array}
             \right] ^{\it T}\right)=\delta.
$$
\end{defi}
If $(A,B)$ is a reachable pair and $(A,C)$ is an observable pair,
then $(A,B,C,D)$ is called a {\em minimal realization}.  In this
case, $\delta$ is called the {\em McMillan degree} of the system. We
denote by $S ^{\delta}_{k,n}$ the set of minimal realizations of
systems over $\K$ having input vectors of size $k$, output vectors
of size $n-k$, and McMillan degree $\delta$.

Let $\{ x_t \} _{t\geq 0}$ be a sequence of vectors in $\F
^{\delta}$ and $\{\binom{y_t}{u_t}\}_{t\geq 0}$, where $y_t \in \F
^{n-k}$ and $u_t \in \F ^k$, a sequence of vectors in $\F ^n$ having
the following properties:
\begin{enumerate}
\item Equations~(\ref{iso}) are satisfied for all $t\in \N_{0}$;
\item There exists a $d\in\N_{0}$ such that $x_{d +1}=0$ and $u_t=0$
for $t\geq d+1$.
\end{enumerate}
These properties guarantee that the sequence
$\{\binom{y_t}{u_t}\}_{t\geq 0}$ has finite weight. We refer to the
truncated sequence $\{\binom{y_t}{u_t} \} _{t=0}^{d}$ as a {\em
finite-weight sequence for $(A,B,C,D)$}.  The following remarks
connect finite-weight sequences and codewords.

Let $(A,B,C,D)\in S _{k,n}^{\delta}$.  The corresponding transfer
function is $T(s) := C(sI - A)^{-1} B + D$. Let $Q^{-1}(s)P(s)$ be a
left coprime factorization of $T(s)$, and set $H(s) := [-Q(s)~
P(s)]$.  Set
$$
y(s) := y_0 s^{d} + y_1 s^{d - 1} + \cdots + y_{d} \in \F ^{n-k}[s]
$$
and
$$
u(s) := u_0 s^{d} + u_1 s^{d - 1} + \cdots + u_{d} \in \F ^k[s],
$$
and use their coefficients to form the vector sequence
$\{\binom{y_t}{u_t} \} _{t=0}^{d}$.  We then have the following
equivalent conditions; see~\cite{8,ro99a,7} for more details:
\begin{enumerate}
\item The set $\{\binom{y_t}{u_t} \} _{t=0}^{d}$ of vectors is a finite-weight sequence for $(A,B,C,D)$.
\item
\begin{equation*}
\left[
\begin{tabular}{lllccccc}
$0$  & $\cdots\cdots\cdots\cdots $ & \multicolumn{1}{l|}{$0$} & $A^d
B$ & $ A^{d -1}B$ & $\cdots $ & $AB$ & $B$ \\ \hline
\multicolumn{3}{c|}{} & $D$ &$0$  &$\cdots$  &$\cdots$  &$0$  \\
\multicolumn{3}{c|}{} & $CB$ & $D$ &$\ddots$  &  &$\vdots$  \\
\multicolumn{3}{c|}{$-I_{(d +1)(n-k)}$} & $CAB$ & $CB$  &$\ddots$  &$\ddots$  &$\vdots$  \\
\multicolumn{3}{c|}{} & $\vdots $ &$\vdots$  & $\ddots$ & $\ddots$ &$0$  \\
&&\multicolumn{1}{l|}{} & $CA^{d -1}B$ & $CA^{d -2}B$ & $ \cdots $ &
$CB$ & $D$
\end{tabular}
\right] \left[
\begin{array}{c}
y_0 \\
y_1 \\
\vdots  \\
y_{d}  \\   \hline
u_0\\
u_1 \\
\vdots  \\
u_{d}
\end{array}
\right] =0.
\end{equation*}
\item There exists a `state vector polynomial'
$$
x(s) = x_0s^{d}+x_{1}s^{{d}-1} + \cdots + x_{d} \in \F ^{\delta}[s]
$$
such that
\begin{equation*}
\label{kern} \left[
\begin{array}{ccc}
sI-A&0&-B\\ -C&I_{n-k}&-D
\end{array}
\right]\left[
\begin{array}{c}
x(s)\\y(s)\\u(s)
\end{array}
\right] =0.
\end{equation*}
\item
$ H(s)\zwei{y(s)}{u(s)}= [-Q(s) \ P(s)] \zwei{y(s)}{u(s)}=0. $
\item $y(s)=T(s)u(s)$.
\end{enumerate}
Further, the right $\F [s]$-kernel of $H(s)$ is an $(n,k,\delta
)$-code $\mathcal{C}$.

The code $\mathcal{C}$ is not quite suitable for our purposes.  This
is due to the fact that the finite-weight sequence
$$
\binom{y_{0}}{u_{0}},\binom{y_{1}}{u_{1}},\ldots ,\binom{y_{d
-1}}{u_{d -1}},\binom{y_{d}}{u_{d}}
$$
corresponds with the codeword
$$
\binom{y_{d}}{u_{d}}+\binom{y_{d -1}}{u_{d -1}}s+\cdots
+\binom{y_{1}}{u_{1}}s^{d -1}+\binom{y_{0}}{u_{0}}s^{d} \in
\mathcal{C}.
$$
Working in the systems setting, we will show there is a realization
$(A,B,C,D)\in S ^{\delta}_{k,n}$ for which any finite-weight
sequence
$$
\binom{y_{0}}{u_{0}},\binom{y_{1}}{u_{1}},\ldots ,\binom{y_{d
-1}}{u_{d -1}},\binom{y_{d}}{u_{d}}
$$
(with $u_0 \neq 0$) formed using (\ref{iso}) has the properties that
$$
\sum_{t=0}^L \text{wt}\left (\binom{y_{t}}{u_{t}}\right )
=(L+1)(n-k)+1
$$
and
$$
\sum_{t=0}^M \text{wt}\left (\binom{y_{t}}{u_{t}}\right )\geq
(n-k)\Big(\Big\lfloor\frac{\delta}{ k} \Big\rfloor+1\Big)+\delta+1.
$$
Due to the order reversal noted above, it will not necessarily be
true that $d_L^c(\mathcal{C})=(L+1)(n-k)+1.$ The next result shows
how to overcome this problem:
\begin{prop}
Let $\mathcal{C}$ be an $(n,k,\delta )$-code with minimal generator
matrix $G(s)$.  Let $\overline{G(s)}$ be the matrix obtained by
replacing each entry $p_{ij}(s)$ of $G(s)$ by
$\overline{p_{ij}(s)}:=s^{\delta _j}p_{ij}(s^{-1})$, where $\delta
_j$ is the $j$th column degree of $G(s)$. Then, $\overline{G(s)}$ is
a minimal generator matrix of an $(n,k,\delta )$-code
$\overline{\mathcal{C}}$, and
$$
\binom{y_{0}}{u_{0}} +\binom{y_{1}}{u_{1}} s + \cdots +
 \binom{y_{d -1}}{u_{d -1}} s^{d
-1} +\binom{y_{d}}{u_{d}} s^{d} \in\overline{\mathcal{C}}
$$
if and only if
$$
\binom{y_{d}}{u_{d}}+\binom{y_{d -1}}{u_{d -1}}s+\cdots
+\binom{y_{1}}{u_{1}}s^{d -1}+\binom{y_{0}}{u_{0}}s^{d} \in
\mathcal{C}.
$$
\end{prop}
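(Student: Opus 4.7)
The approach exploits the factorization $\overline{G(s)}=G(s^{-1})\Delta(s)$, where $\Delta(s):=\mathrm{diag}(s^{\delta_1},\ldots,s^{\delta_k})$, together with the fact that the columnwise reversal is involutive: $\overline{\overline{G(s)}}=G(s)$. This involutivity collapses the codeword-correspondence statement to a single direction.

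First I would verify that $\overline{G(s)}$ is a minimal generator matrix of an $(n,k,\delta)$-code. Because $\mathcal{C}$ is a direct summand of $\F[s]^n$, $G(s)$ admits a polynomial left inverse and hence $G(s_0)$ has full rank for every $s_0\in\K$; in particular $G(0)$ is full rank. A direct computation shows that the high-order coefficient matrix of $\overline{G(s)}$ equals $G(0)$, so $\overline{G(s)}$ is itself minimal with the same column degrees $\delta_1,\ldots,\delta_k$, and the code $\overline{\mathcal{C}}$ it generates has degree $\sum_j\delta_j=\delta$. To see that $\overline{\mathcal{C}}$ is genuinely a direct summand, I would check that $\overline{G(s_0)}$ has full rank for every $s_0\in\K$: for $s_0\neq 0$ this follows from the factorization above, while for $s_0=0$ one has $\overline{G(0)}=G_\infty$, which is full rank by the minimality of $G(s)$.

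For the codeword correspondence, write $v(s):=\sum_{t=0}^{d}\binom{y_t}{u_t}s^t$ and $\tilde v(s):=s^d v(s^{-1})=\sum_{t=0}^{d}\binom{y_t}{u_t}s^{d-t}$. If $v(s)\in\overline{\mathcal{C}}$, express $v(s)=\overline{G(s)}w(s)=G(s^{-1})\Delta(s)w(s)$ with $w(s)\in\F[s]^k$. The predictable degree property of the minimal generator matrix $\overline{G(s)}$ yields $\delta_j+\deg w_j\le\deg v\le d$, so $w'_j(s):=s^{d-\delta_j}w_j(s^{-1})$ is a polynomial for each $j$. Substituting $s\mapsto s^{-1}$ and multiplying by $s^d$ then gives $\tilde v(s)=G(s)w'(s)\in\mathcal{C}$. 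The reverse implication follows by running exactly the same argument with $G$ and $\overline{G}$ interchanged, which is legitimate because $\overline{\overline{G(s)}}=G(s)$. The main subtlety is this last degree bookkeeping: without the predictable degree property one cannot guarantee that $w'$ is polynomial, so minimality of the generator matrix is essential.
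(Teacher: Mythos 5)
Your proof is correct, and it diverges genuinely from the paper's in the second half. For the codeword correspondence, the paper reasons entirely through minors: since $\mathcal{C}$ and $\overline{\mathcal{C}}$ are direct summands of $\F[s]^n$, membership $v(s)\in\mathcal{C}$ is equivalent to the vanishing of every $(k+1)\times(k+1)$ minor of $\begin{bmatrix}G(s)&\vline&v(s)\end{bmatrix}$, and the paper observes that each such minor vanishes precisely when the corresponding minor of $\begin{bmatrix}\overline{G(s)}&\vline&\overline{v(s)}\end{bmatrix}$ does. No coefficient vector is ever exhibited. You instead write $v(s)=\overline{G(s)}\,w(s)$ explicitly and construct the coefficient vector $w'(s)$ of the reversed codeword via the substitution $s\mapsto s^{-1}$ and the identity $\overline{G(s)}=G(s^{-1})\Delta(s)$; the one nontrivial step is polynomiality of $w'_j(s)=s^{d-\delta_j}w_j(s^{-1})$, which you obtain from the predictable degree property of the column-reduced matrix $\overline{G(s)}$. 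Your route is more constructive and makes transparent exactly where minimality of the generator matrix enters (the degree bookkeeping), whereas the paper's minor argument is shorter and bypasses the predictable degree property altogether. For the first half of the proposition your argument and the paper's are essentially equivalent reformulations of the same fact: the paper argues directly that a non-unit common divisor of the $k\times k$ minors of $\overline{G(s)}$ is impossible, while you verify the equivalent criterion that $\overline{G(s_0)}$ has full rank for every $s_0\in\K$, handling $s_0=0$ via $\overline{G}_0=G_\infty$ and $s_0\neq 0$ via the factorization. Both proofs hinge on, and establish, the involution $\overline{\overline{G(s)}}=G(s)$; note that this in turn rests on the column degrees of $\overline{G(s)}$ being $\delta_1,\ldots,\delta_k$, which you should nail down (it follows from $G(0)$ having no zero column) before invoking the high-order coefficient computation, since the two claims are slightly intertwined in your write-up.
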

\begin{proof}
First, $\overline{G(s)}$ has rank $k$, since a $k\times k$ minor of
$\overline{G(s)}$ is zero if and only if the corresponding minor of
$G(s)$ is.

Next, let $G_0$ denote the $n\times k$ matrix whose $ij$th entry is
$p_{ij}(0)$ and $\overline{G} _0$ the $n\times k$ matrix whose
$ij$th entry is $\overline{p_{ij}(0)}$.  Because $\mathcal{C}$ is a
summand of $\F [s]^n$, $G_0$ has full rank, so that each column of
$G_0$ has at least one nonzero entry.  This means that
$G_0=\overline{G} _{\infty}$, $\overline{G} _{\infty}$ has full
rank, corresponding columns of $G(s)$ and $\overline{G(s)}$ have the
same column degrees, and $\overline{\overline{G(s)}} =G(s)$.  From
the definition of $\overline{G(s)}$, we have that
$G_{\infty}=\overline{G} _0$; since $G(s)$ is minimal, $\overline{G}
_0$ also has full rank.

Suppose $p(s)\in \F [s]$ has degree $d$ and is a common divisor of
the $k\times k$ minors of $\overline{G(s)}$.  Since $\overline{G}
_0$ has full rank, $p(0)\neq 0$, so that $s^dp(s^{-1})$ has degree
$d$.  Since $\overline{\overline{G(s)}} =G(s)$, $s^dp(s^{-1})$ is a
common divisor of the $k\times k$ minors of $G(s)$. As $\mathcal{C}$
is a summand of $\F [s]^n$, it follows that $d=0$, so that the only
common divisors of the $k\times k$ minors of $\overline{G(s)}$ are
the nonzero elements of $\F$. Thus, the column space of
$\overline{G(s)}$ is a summand of $\F [s]^n$, which means that it is
a rate $k/n$ convolutional code $\overline{\mathcal{C}}$. It follows
from the remarks in the preceding paragraph that $\overline{G(s)}$
is a minimal generator matrix of $\overline{\mathcal{C}}$ and that
$\overline{\mathcal{C}}$ has degree $\delta$.

Consider the vector polynomials
$$
v(s):=v_{d}+v_{d -1}s+\cdots +v_{1}s^{d -1}+v_{0}s^{d}
$$
and
$$
\overline{v(s)}:=v_{0}+v_{1}s+\cdots +v_{d -1}s^{d -1}+v_{d}s^{d}
$$
in $\F ^n[s]$, and note that $\overline{v(s)} =s^{d}v(s^{-1})$.
Thinking of $v(s)$ and $\overline{v(s)}$ as column vectors in $\F
[s]^n$, we observe that a $(k+1)\times (k+1)$ minor of
$\begin{bmatrix} G(s)&\vline&v(s)\end{bmatrix}$ is zero if and only
if the corresponding minor of $\begin{bmatrix}\overline{G(s)}&\vline
&\overline{v(s)}
\end{bmatrix}$ is.  Since $\mathcal{C}$ and $\overline{\mathcal{C}}$
are summands of $\F [s]^n$, this means that $v(s)\in \mathcal{C}$ if
and only if $\overline{v(s)}\in \overline{\mathcal{C}}$.
\end{proof}

This result, together with the remarks preceding it, shows that
$$
\binom{y_{0}}{u_{0}},\binom{y_{1}}{u_{1}},\ldots ,\binom{y_{d
-1}}{u_{d -1}},\binom{y_{d}}{u_{d}}
$$
is a finite-weight sequence for $(A,B,C,D)$ if and only if
$$
\binom{y_{0}}{u_{0}} +\binom{y_{1}}{u_{1}} s + \cdots +
 \binom{y_{d -1}}{u_{d -1}} s^{d
-1} +\binom{y_{d}}{u_{d}} s^{d} \in\overline{\mathcal{C}}.
$$
$\overline{\mathcal{C}}$, then, will have the property that
$d_M^c(\overline{\mathcal{C}})=(n-k)\Big
(\Big\lfloor\frac{\delta}{k} \Big\rfloor +1\Big )+\delta +1$.  For
the rest of the paper, we will refer to the code
$\overline{\mathcal{C}}$ as the code represented by the matrices
$(A,B,C,D)$.

\section{Trivial Rank Deficiency and the sMDS Property}
In this section, we give conditions on the entries of the matrices
in a realization $(A,B,C,D)\in S ^{\delta}_{k,n}$ guaranteeing that
the convolutional code these matrices represent has both the MDP and
sMDS properties. For $(A,B,C,D)\in S ^{\delta}_{k,n}$ and $j\in \N
_{0}$, we form the matrices
\begin{equation}                          \label{Bl-To}
       \mathcal{T}_j  :=
 \left[
  \begin{array}{ccccc}
  D           &        0           &\cdots    &\cdots &0 \\
  CB          &        D           &\ddots    &       &\vdots  \\
  CAB         &        CB          &\ddots    &\ddots &\vdots  \\
  \vdots      &        \vdots      &\ddots    &\ddots &0  \\
  CA^{j -1}B  &        CA^{j -2}B  &\cdots    & CB    & D
  \end{array}
  \right].
\end{equation}
\begin{notation}
Let $l_1,l_2 \in \N$ satisfy $1\leq l_1\leq (j+1)(n-k)$ and $1\leq
l_2\leq (j+1)k$. Let $1\leq i_1<\cdots <i_{l_1}\leq (j+1)(n-k)$ and
$1\leq j_1<\cdots <j_{l_2}\leq (j+1)k$ be two sequences of integers.
We denote by $(\mathcal{T} _j)_{j_1,\ldots ,j_{l_2}}^{i_1,\ldots
,i_{l_1}}$ the $l_1\times l_2$ submatrix obtained from $\mathcal{T}
_j$ by intersecting rows $i_1,\ldots ,i_{l_1}$ and columns
$j_1,\ldots ,j_{l_2}$.
\end{notation}

\noindent Notice that, if $(\mathcal{T} _j)_{j_1}^{i_1}\not =0$,
then $j_1\leq\lceil\frac{i_1}{n-k}\rceil k$.

In what follows, the notion of trivial rank deficiency plays an
important role. To define trivial rank deficiency, we think of
replacing the entries of the block matrices in $\mathcal{T}_j$ with
the indeterminates of the polynomial ring
$R:=\K[x_1,x_2,\ldots,x_{(j+1)(n-k)k}]$. Specifically, we replace
the entry $(s,t)$ of the matrix $D$ with the indeterminate $x_{(s -
1)k + t}$ and the entry $(s,t)$ of the matrix $CA^{i}B$ with the
indeterminate $x_{(i+1)(n - k)k + (s - 1)k + t}$.  The zero entries
above the block diagonal remain zero.

\begin{defi}\label{trd}
Let $c$ be an integer with $0\leq c\leq n-k-1$, and let $l$ be an
integer satisfying $1\leq l\leq \min \{ (j+1)(n-k)-c,(j+1)k\}$.  A
square submatrix of $\mathcal{T}_j$ is said to be {\em trivially
rank deficient} if the determinant of this submatrix is zero when it
is viewed as a matrix over $R$ in the manner described above. A
submatrix
$(\mathcal{T}_j)^{i_1,i_2,\ldots,i_{l+c}}_{j_1,j_2,\ldots,j_l}$ of
$\mathcal{T}_j$ is called {\em trivially rank deficient} if all
${l+c \choose l}$ $l\times l$ submatrices of
$(\mathcal{T}_j)^{i_1,i_2,\ldots ,i_{l+c}} _{j_1,j_2,\ldots,j_l}$
are trivially rank deficient.
\end{defi}
To say that $(\mathcal{T}_j)^{i_1,i_2,\ldots ,i_{l+c}}
_{j_1,j_2,\ldots,j_l}$ is trivially rank deficient is to say that
$(\mathcal{T}_j)^{i_1,i_2,\ldots ,i_{l+c}} _{j_1,j_2,\ldots,j_l}$
has less than full rank regardless of how elements of $\K$ are
substituted for the indeterminates of $R$.  The next lemma shows how
to determine if a given submatrix is trivially rank deficient.

\begin{lemma}\label{B}
Let $l$ be an integer with $1\leq l\leq \min \{
(j+1)(n-k)-c,(j+1)k\}$, and let $(\mathcal{T}_j)^{i_1,i_2,\ldots
,i_{l+c}}_{j_1,j_2,\ldots ,j_l}$ be an $(l+c)\times l$ submatrix of
$\mathcal{T}_j$.  Then, the following are equivalent:
\begin{enumerate}
\item $(\mathcal{T}_j)^{i_1,i_2,\ldots ,i_{l+c}}_{j_1,j_2,\ldots ,j_l}$ is trivially
rank deficient.
\item $(\mathcal{T}_j)^{i_{1+c},i_{2+c},\ldots ,i_{l+c}}_{j_1,j_2,\ldots ,j_l}$ is trivially rank deficient.
\item The inequality
$$
   j_t>\Big\lceil\frac{i_{t+c}}{n-k}\Big\rceil k
$$
holds for some $t\in\{ 1,\ldots ,l \}$.
\end{enumerate}
\end{lemma}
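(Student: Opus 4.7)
The plan is to establish the equivalence cyclically as $(3) \Rightarrow (1) \Rightarrow (2) \Rightarrow (3)$. The structural fact underlying everything is the observation made just before the lemma: in $R$, the entry $(\mathcal{T}_j)^{i}_{j_1}$ is identically zero precisely when $j_1 > \lceil i/(n-k)\rceil k$. Because this depends only on the ceilings of $i/(n-k)$ and $j_1/k$, the zero pattern is monotone in a block sense -- decreasing a row index or increasing a column index can only create further zeros.

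For $(3) \Rightarrow (1)$, fix an index $t_0$ with $j_{t_0} > \lceil i_{t_0+c}/(n-k)\rceil k$. By monotonicity, every entry $(\mathcal{T}_j)^{i_s}_{j_{t'}}$ with $s \leq t_0+c$ and $t' \geq t_0$ is identically zero, so the $(l+c)\times l$ submatrix in (1) contains an all-zero block of size $(t_0+c)\times (l-t_0+1)$. In any $l\times l$ sub-selection of these $l+c$ rows at least $(t_0+c)-c = t_0$ of the zero rows survive, producing a $t_0\times(l-t_0+1)$ zero block inside an $l\times l$ matrix; since $t_0+(l-t_0+1) = l+1 > l$, a Frobenius--K\"onig argument makes its determinant vanish as a polynomial in $R$. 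The implication $(1) \Rightarrow (2)$ is immediate, since $(\mathcal{T}_j)^{i_{1+c},\ldots,i_{l+c}}_{j_1,\ldots,j_l}$ is itself one of the $l\times l$ submatrices of the matrix considered in (1).

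The main work is $(2) \Rightarrow (3)$, which I would prove by contraposition and induction on $l$. Assume $j_t \leq \lceil i_{t+c}/(n-k)\rceil k$ for every $t$, and set $M := (\mathcal{T}_j)^{i_{1+c},\ldots,i_{l+c}}_{j_1,\ldots,j_l}$. The hypothesis guarantees that every entry $M_{t,t'}$ with $t'\leq t$ is a nonzero indeterminate, so each diagonal entry $M_{t,t}$ does not vanish. The $(l-1)\times(l-1)$ submatrix $M^{(1,1)}$ obtained by striking the first row and column is indexed by $i_{2+c},\ldots,i_{l+c}$ and $j_2,\ldots,j_l$, and satisfies the shifted hypothesis $j_{s+1}\leq \lceil i_{(s+1)+c}/(n-k)\rceil k$ for $s=1,\ldots,l-1$; by induction, $\det M^{(1,1)}$ is a nonzero polynomial, so the leading cofactor term $M_{1,1}\det M^{(1,1)}$ in the expansion of $\det M$ along the first row is a nonzero product. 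The principal obstacle is the Toeplitz identification of variables along the block diagonals of $\mathcal{T}_j$, which lets a single indeterminate of $R$ appear in several positions of $M$ and in principle allows $M_{1,1}\det M^{(1,1)}$ to be cancelled by the other cofactor contributions $M_{1,t'}\det M^{(1,t')}$ with $t'>1$. I expect this to be resolved either by a careful monomial analysis in the Leibniz expansion -- tracking which (block, intra-block position) pair each permutation selects and verifying that the diagonal monomial $\prod_t M_{t,t}$ arises from a unique permutation -- or, equivalently, by an explicit specialization of the independent block matrices $D, CB, CAB, \ldots, CA^{j-1}B$ (which contribute disjoint families of generators to $R$) in which a distinguished monomial of $\det M$ evaluates to a nonzero element of the base field.
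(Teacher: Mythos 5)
Your chain $(3)\Rightarrow(1)\Rightarrow(2)\Rightarrow(3)$ is a valid way to organize the equivalence, and the first two implications are sound. Your $(3)\Rightarrow(1)$ via the zero-block/Frobenius--K\"onig count is a correct argument (the paper instead observes that the inequality persists for every choice of $l$ rows $w_t\leq i_{t+c}$, then reuses the $(2)\Leftrightarrow(3)$ characterization; yours is a fine alternative and arguably more self-contained). The implication $(1)\Rightarrow(2)$ is immediate, exactly as you and the paper both say.

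The genuine gap is in $(2)\Rightarrow(3)$, and you have correctly identified where it is but not closed it. You propose a cofactor expansion along the \emph{first row}, i.e.\ the term $M_{1,1}\det M^{(1,1)}$, and observe that the Toeplitz identification of indeterminates in $R$ could let that term cancel against the other cofactors. That worry is real: the top-left entry $M_{1,1}$ has the \emph{smallest} row index and smallest column index among entries of $M$, so nothing prevents its indeterminate from reappearing elsewhere in $M$ (at a position $(i_{s+c},j_t)$ with $s>1$, $t>1$, matching residues and block level), and your two suggested rescues --- a monomial count in the Leibniz expansion, or a specialization --- are left as hopes rather than arguments.

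The paper sidesteps this by expanding along the first \emph{column} and pivoting on the \emph{bottom-left} entry $(\mathcal{T}_j)^{i_{l+c}}_{j_1}$. That entry sits at the maximal row index and minimal column index of the submatrix, hence at the extremal block depth $\lceil i_{l+c}/(n-k)\rceil - \lceil j_1/k\rceil$; a short residue argument shows its indeterminate $x_\iota$ therefore occurs \emph{exactly once} in the submatrix, so no cancellation is possible. If that entry is zero, the whole matrix is zero and we are done; if not, the vanishing of $\det$ forces the cofactor, namely $\det$ of the upper-right $(l-1)\times(l-1)$ submatrix $(\mathcal{T}_j)^{i_{1+c},\ldots,i_{l-1+c}}_{j_2,\ldots,j_l}$, to vanish. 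Induction then yields a zero at some position $(i_{h+c},j_{h+1})$, which by the monotone zero pattern makes the whole matrix block lower triangular; one of the two diagonal blocks must be trivially rank deficient, and a second application of the induction hypothesis puts a zero on the overall diagonal, i.e.\ yields $j_t>\lceil i_{t+c}/(n-k)\rceil k$ for some $t$. That choice of pivot --- bottom-left rather than top-left --- is precisely the missing idea; without it, your expansion does not give a clean reduction.
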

\begin{proof}
For notational convenience, we set $(\mathcal{T} _j)_{\bar j}^{\bar
i}:=(\mathcal{T}_j)^{i_{1+c},i_{2+c},\ldots
,i_{l+c}}_{j_1,j_2,\ldots ,j_l}$ and $(\mathcal{T} _j)^{\tilde
i}_{\bar j}:=(\mathcal{T}_j)^{i_1,i_2,\ldots
,i_{l+c}}_{j_1,j_2,\ldots ,j_l}$.\\
${\bf 1 \Longrightarrow 2}$:  Suppose $(\mathcal{T} _j)^{\tilde
i}_{\bar j}$ is trivially rank deficient. Then, by definition, all
${l+c \choose l}$ $l\times l$ submatrices of $(\mathcal{T}
_j)^{\tilde i}_{\bar j}$ are trivially rank deficient.  In
particular, $(\mathcal{T} _j)_{\bar j}^{\bar i}$ is trivially rank deficient.\\
${\bf 2 \Longrightarrow 3}$:  Suppose that $(\mathcal{T} _j)_{\bar
j}^{\bar i}$ is trivially rank deficient.  We first use induction on
$l$ to prove that $(\mathcal{T} _j)_{\bar j}^{\bar i}$ is lower
block triangular and has a 0 on its diagonal. If $l=1$ and
$(\mathcal{T} _j)_{\bar j}^{\bar i}$ is trivially rank deficient,
then the claim is trivially true. Suppose $l$ satisfies $2\leq l
\leq \min \{ (j+1)(n-k)-c,(j+1)k\}$, that the induction hypothesis
is satisfied for $1,2,\ldots ,l-1$, and that $(\mathcal{T} _j)_{\bar
j}^{\bar i}$ is trivially rank deficient. If $(\mathcal{T}
_j)_{j_1}^{i_{l+c}} =0$, then every entry in $(\mathcal{T} _j)_{\bar
j}^{\bar i}$ is 0, and thus all diagonal entries are 0.  If
$(\mathcal{T} _j)_{j_1}^{i_{l+c}}\neq 0$, then let $x_{\iota}$ be
the indeterminate corresponding with $(\mathcal{T}
_j)_{j_1}^{i_{l+c}}$ when $\mathcal{T} _j$ is viewed over $R$ in the
manner described before Definition \ref{trd}. Notice that, when
$(\mathcal{T} _j)_{\bar j}^{\bar i}$ is viewed in this way, the
indeterminate $x_{\iota}$ appears exactly once. Since $x_{\iota}$ is
transcendental over $\K (x_1,\ldots ,x_{\iota -1})$, doing a
cofactor expansion along the first column of $(\mathcal{T} _j)_{\bar
j}^{\bar i}$ (still viewing $(\mathcal{T} _j)_{\bar j}^{\bar i}$
over $R$) shows that the $(l - 1)\times (l - 1)$ submatrix
$(\mathcal{T}_j)^{i_{1+c},i_{2+c},\ldots ,i_{l-1+c}}_{j_2,j_3,\ldots
,j_l}$ is trivially rank deficient.  By the induction hypothesis,
$(\mathcal{T}_j)^{i_{1+c},i_{2+c},\ldots ,i_{l-1+c}}_{j_2,j_3,\ldots
,j_l}$ is lower block triangular and has a 0 on its diagonal. It
follows that there is an integer $h$ satisfying $1\leq h\leq l-1$
such that $(\mathcal{T} _j)_{j_{h+1}}^{i_{h+c}}= 0$. This, in turn,
means that $(\mathcal{T} _j)_{\bar j}^{\bar i}$ is lower block
triangular. Because we assumed that $(\mathcal{T} _j)_{\bar j}^{\bar
i}$ is trivially rank deficient, it follows that at least one of
$(\mathcal{T}_j)^{i_{1+c},i_{2+c},\ldots ,i_{h+c}}_{j_1,j_2,\ldots
,j_h}$ and $(\mathcal{T}_j)^{i_{h+1+c},i_{h+2+c},\ldots
,i_{l+c}}_{j_{h+1},j_{h+2},\ldots ,j_l}$ is trivially rank
deficient. By the induction hypothesis, at least one of these
submatrices is lower block triangular and has a 0 on its diagonal.
As the diagonals of these submatrices lie on the diagonal of
$(\mathcal{T} _j)_{\bar j}^{\bar i}$, the claim follows.

Next, we note that the diagonal entries of $(\mathcal{T} _j)_{\bar
j}^{\bar i}$ are the entries $(\mathcal{T} _j)_{j_1}^{i_{1+c}}$,
$(\mathcal{T} _j)_{j_2}^{i_{2+c}}$, $\ldots$ , $(\mathcal{T}
_j)_{j_l}^{i_{l+c}}$. From the structure of $\mathcal{T} _j$, it is
clear that, when $(\mathcal{T} _j)_{\bar j}^{\bar i}$ is viewed over
$R$, a diagonal entry $(\mathcal{T} _j)_{j_t}^{i_{t+c}}$ is 0 if and
only if
$$
j_t > \Big\lceil\frac{i_{t+c}}{n-k}\Big\rceil k.
$$
It follows that
$$
j_t > \Big\lceil\frac{i_{t+c}}{n-k}\Big\rceil k
$$
for some $t\in \{1,\ldots ,l\}$.
\\
${\bf 3 \Longrightarrow 1}$:  If
$$
   j_t>\Big\lceil\frac{i_{t+c}}{n-k}\Big\rceil k
$$
for some $t\in\{ 1,\ldots ,l \}$, then $(\mathcal{T} _j)_{\bar
j}^{\bar i}$ has a 0 on its diagonal and is lower block triangular.
$(\mathcal{T} _j)_{\bar j}^{\bar i}$ is therefore trivially rank
deficient. Let $(\mathcal{T}_j)^{w_1,w_2,\ldots
,w_l}_{j_1,j_2,\ldots ,j_l}$ be a submatrix of $(\mathcal{T}
_j)^{\tilde i}_{\bar j}$.  Since $w_t\leq i_{t+c}$,
$$
   j_t>\Big\lceil\frac{w_t}{n-k}\Big\rceil k
$$
holds as well.  As before, it follows that
$(\mathcal{T}_j)^{w_1,w_2,\ldots ,w_l}_{j_1,j_2,\ldots ,j_l}$ is
trivially rank deficient. Consequently, all $l+c\choose l$ $l\times
l$ submatrices of $(\mathcal{T} _j)^{\tilde i}_{\bar j}$ are
trivially rank deficient, so that $(\mathcal{T} _j)^{\tilde i}_{\bar
j}$ is trivially rank deficient.
\end{proof}

We next characterize the MDP and sMDS properties in terms of trivial
rank deficiency.  We denote by $r$ the difference of the generalized
Singleton bound and the upper bound for the $L$th column distance:
\begin{multline*}
   r:=(n-k)\Big (\Big \lfloor \frac{\delta}{k} \Big \rfloor +1\Big ) +\delta
   +1-\Big (\Big \lfloor
   \frac{\delta}{k} \Big \rfloor +\Big \lfloor \frac{\delta}{n-k} \Big \rfloor +1\Big
   )(n-k)-1=\\ \delta -\Big \lfloor
   \frac{\delta}{n-k} \Big \rfloor (n-k).
\end{multline*}
Note that $r$ is the remainder of $\delta$ on division by $n-k$.  If
$r=0$, then $L=M$, and a code is MDP if and only if it is sMDS. This
case was considered in~\cite{gl03r} and~\cite{12}, so we will assume
that $r\in \{1,\ldots ,n-k-1\}$.  In this situation, $M=L+1$.

\begin{theo}\label{CharsMDS}
Let $(A,B,C,D)\in S ^{\delta}_{k,n}$ and $\C$ be the
$(n,k,\delta)$-code represented by $(A,B,C,D)$. Then, $\mathcal{C}$
is an MDP code if and only if every square submatrix of $\mathcal{T}
_L$ that is not trivially rank deficient has full rank.  $\C$ is an
sMDS code if and only if, for every integer $l$ satisfying $1\leq
l\leq \min\{(M+1)(n-k)-(n-k-r),(M+1)k\}$, every submatrix
$(\mathcal{T}_M)^{i_1,i_2,\ldots ,i_{l+n-k-r}}_{j_1,j_2,\ldots
,j_l}$ that is not trivially rank deficient has full rank.
\end{theo}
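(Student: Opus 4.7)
The plan is to use the dictionary from Section~2 to rewrite each column-distance condition as a rank condition on submatrices of $\mathcal{T}_j$.  A codeword of $\overline{\mathcal{C}}$ truncated at position $j$ corresponds to a pair $(y,u)$ with $y=\mathcal{T}_j u$, and its truncated weight equals $\wt(u)+\wt(y)$; since $y_0=Du_0$, the requirement $v_0\ne 0$ is equivalent to $u_0\ne 0$.  Thus a codeword with small truncated weight and $v_0\ne 0$ produces a submatrix of $\mathcal{T}_j$ (with column indices $\mathrm{supp}(u)$ and row indices chosen from the zero rows of $y$) annihilated by a nonzero vector, and conversely the kernel vector of a rank-deficient submatrix that is not trivially rank deficient produces such a codeword.

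For the MDP characterization I would argue both directions using Lemma~\ref{B} and Proposition~\ref{P-dcj.bound}.  For the forward direction, suppose a square submatrix of $\mathcal{T}_L$ that is not trivially rank deficient and whose first column lies in block $\tau+1$ were rank deficient.  Lemma~\ref{B} forces the row indices to lie in blocks $\ge \tau+1$, so $y=\mathcal{T}_L u$ is zero in block rows $1,\ldots,\tau$ (structurally, since $u_0=\cdots=u_{\tau-1}=0$) together with the $l$ specified zero rows.  A weight count gives $\wt(v_0,\ldots,v_L)\le (L-\tau+1)(n-k)$, and shifting by $\tau$ (legitimate because $\overline{\mathcal{C}}$ is a summand of $\F[s]^n$) produces a codeword $v'\in\overline{\mathcal{C}}$ with $v'_0\ne 0$ and truncated weight at most $(L-\tau+1)(n-k)$, contradicting the maximal value $d_{L-\tau}^c=(L-\tau+1)(n-k)+1$ supplied by MDP via Proposition~\ref{P-dcj.bound}.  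For the backward direction I would induct on $j\le L$: having $d_0^c,\ldots,d_{j-1}^c$ maximal, a codeword that violates $d_j^c=(j+1)(n-k)+1$ supplies at least $l$ zero rows of $y$, and the inductive hypothesis yields the prefix inequality $\wt(v_0,\ldots,v_{\sigma-2})\ge (\sigma-1)(n-k)+1$, which via $\wt(v_0,\ldots,v_j)=\wt(v_0,\ldots,v_{\sigma-2})+\wt(v_{\sigma-1},\ldots,v_j)$ is precisely the Hall-type matching condition for selecting zero rows $i_1<\cdots<i_l$ with $\mathrm{block}(i_t)\ge \mathrm{block}(j_t)$.  Lemma~\ref{B} then certifies that the resulting submatrix of $\mathcal{T}_j\subset\mathcal{T}_L$ is not trivially rank deficient, contradicting the full-rank hypothesis.

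The sMDS characterization follows the same template on $\mathcal{T}_M$ with $(l+c)\times l$ submatrices, exploiting the numerical identity $(M+1)(n-k)-((\text{Singleton bound})-1)=n-k-r=c$, which converts an sMDS-violating codeword into $|S_y^c|\ge l+c$ zero rows of $y$.  The matching now has to produce $l+c$ ordered zero rows of which the top $l$ satisfy the block constraints of Lemma~\ref{B} while the bottom $c$ are unconstrained; a greedy top-down selection handles the constrained part, after which one verifies that the unused zero rows automatically lie below the smallest matched index.  The forward direction reuses the shifting argument of the MDP proof, and the backward direction proceeds by an auxiliary induction that first derives the weaker lower bounds $d_j^c\ge j(n-k)+r$ for $j\le L$ from the sMDS submatrix condition (by applying the same matching mechanism inside the $\mathcal{T}_j$'s sitting in $\mathcal{T}_M$); these bounds then feed the Hall-type inequality needed to complete the main induction.

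The main obstacle I expect is the combinatorial matching step in the sMDS case.  Verifying that the greedy top-down selection of the $l$ constrained zero rows leaves at least $c$ remaining zero rows strictly below the smallest matched index requires a careful case analysis showing that any unmatched zero row either fails its block constraint or would have been preferred by the greedy procedure.  The auxiliary induction establishing $d_j^c\ge j(n-k)+r$ from the sMDS condition without assuming MDP is the other delicate step, because the natural prefix-weight inequality required to run the Hall matching in the backward direction is not available from sMDS alone and must be bootstrapped from the submatrix condition through the very mechanism being proved.
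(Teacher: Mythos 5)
Your translation between truncated codewords and submatrices of $\mathcal{T}_j$ is the right starting point and matches the paper, and your $\Longrightarrow$ direction (rank-deficient non-trivially-rank-deficient submatrix yields a kernel vector $\bar u$, build $u$, shift so $u_0\neq 0$, count weights) is essentially the paper's argument. The genuine gap is in your $\Longleftarrow$ direction. You try to \emph{construct} a $(w+n-k-r)\times w$ submatrix whose row indices satisfy the block inequalities of Lemma~\ref{B} by a Hall-type matching, and you bootstrap this with an auxiliary induction establishing $d_j^c\ge j(n-k)+r$. Both pieces are unnecessary and unresolved (you flag them yourself as obstacles, and the auxiliary bound is also off by one: what you can actually get is $d_j^c\ge j(n-k)+r+1$, which at $j=M$ already equals the Singleton bound and ends the proof). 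The key idea you are missing is that no selection or matching is needed: take \emph{any} $w+n-k-r$ zero rows of $y$ together with the columns $\mathrm{supp}(u)$, and show that the resulting submatrix is automatically not trivially rank deficient \emph{because} it annihilates $\bar u$. Concretely, since $u_0\neq 0$ one has $j_1\leq k\leq\lceil i_{1+n-k-r}/(n-k)\rceil k$, so the $(1+n-k-r)\times 1$ submatrix is not trivially rank deficient; by the hypothesis it has full rank, and the annihilation relation then forces $j_2\leq\lceil i_{2+n-k-r}/(n-k)\rceil k$; iterating via Lemma~\ref{B} and the hypothesis gives $j_t\leq\lceil i_{t+n-k-r}/(n-k)\rceil k$ for all $t$, so the full submatrix is not trivially rank deficient, hence full rank, contradicting that it kills $\bar u\neq 0$. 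This single telescoping pass replaces your auxiliary induction, Hall condition, and greedy selection, and it yields at most $w+n-k-r-1$ zero positions in $y$, hence $\wt(v)\ge(L+1)(n-k)+1+r$ directly.

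A secondary issue: for rectangular submatrices, ``not trivially rank deficient'' in the paper (Definition~\ref{trd}) means \emph{some} $l\times l$ subsubmatrix is not trivially rank deficient, not \emph{all}; your greedy selection description speaks of ``the top $l$ satisfying the block constraints while the bottom $c$ are unconstrained,'' but Lemma~\ref{B} puts the constraints on $i_{1+c},\ldots,i_{l+c}$ (the lower rows), so even the bookkeeping in your matching step would need to be reversed. Finally, the paper does not reprove the MDP part --- it cites Corollary~2.5 of the MDP paper --- so your extra work there is optional; but if you do want to prove it yourself, the same telescoping argument specialised to $c=0$ works and is simpler than your Hall-based induction.
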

\begin{proof}
The first statement is~\cite[Corollary 2.5]{12}.  We consider next
the second statement.\\
$\Longleftarrow$: Suppose that
$$
  v:=\left[
  \begin{array}{ccccccccc}
  y_0^T &  y_1^T &\ldots & y_M^T   &  \vline&u_0^T&u_1^T & \cdots  &
  u_M^T
\end{array}\right]^T
$$
is formed from the first $M+1$ vectors of a finite-weight sequence
for $(A,B,C,D)$ with $u_0\not = 0$, so that the matrix equation
\begin{equation*}
\left[
\begin{tabular}{lllccccc}
\multicolumn{3}{c|}{} & $D$ &$0$  &$\cdots$  &$\cdots$  &$0$  \\
\multicolumn{3}{c|}{} & $CB$ & $D$ &$\ddots$  &  &$\vdots$  \\
\multicolumn{3}{c|}{$-I_{(M +1)(n-k)}$} & $CAB$ & $CB$  &$\ddots$  &$\ddots$  &$\vdots$  \\
\multicolumn{3}{c|}{} & $\vdots $ &$\vdots$  & $\ddots$ & $\ddots$ &$0$  \\
&&\multicolumn{1}{l|}{} & $CA^{M -1}B$ & $CA^{M -2}B$ & $ \cdots $ &
$CB$ & $D$
\end{tabular}
\right] v =0_{(M +1)n}
\end{equation*}
is satisfied, and denote the weight of
$$
u:=\left[
  \begin{array}{cccc}
  u_0^T & u_1^T &\cdots &  u_M^T
  \end{array}
\right]^T
$$
by $w$.  For $t\in \{1,\ldots ,w \}$, let $j_t$ denote the position
of the $t$th nonzero entry in $u$, and let $\bar u$ denote the
vector obtained from $u$ by deleting all of the zero entries.
Suppose that $(\mathcal{T} _M)_{\bar j}^{\tilde
i}:=(\mathcal{T}_M)^{i_1,i_2,\ldots ,i_{w+n-k-r}}_{j_1,j_2,\ldots
,j_w}$ is a submatrix of $\mathcal{T}_M$ such that
\begin{equation}\label{equat}
(\mathcal{T} _M)_{\bar j}^{\tilde i}\bar u=0.
\end{equation}
Since $u_0 \not = 0$, we have that
$$
   j_1\leq k \leq \Big \lceil \frac{i_{1}}{n-k} \Big \rceil k\leq \Big \lceil \frac{i_{1+n-k-r}}{n-k} \Big \rceil
   k.
$$
By Lemma \ref{B}, $(\mathcal{T}_M)^{i_1,i_2,\ldots
,i_{1+n-k-r}}_{j_1}$ is not trivially rank deficient, so that it has
full rank.  This means that at least one of its entries is nonzero,
and, since (\ref{equat}) holds, it follows that
$$
   j_2\leq \Big \lceil \frac{i_{1+n-k-r}}{n-k} \Big \rceil k\leq \Big \lceil \frac{i_{2+n-k-r}}{n-k} \Big \rceil
   k.
$$
By Lemma \ref{B}, $(\mathcal{T}_M)^{i_1,i_2,\ldots
,i_{2+n-k-r}}_{j_1,j_2}$ is not trivially rank deficient, so that it
has full rank.  Consequently, at least one $2\times 2$ minor of
$(\mathcal{T}_M)^{i_1,i_2,\ldots ,i_{2+n-k-r}}_{j_1,j_2}$ is
nonzero.  Again, since (\ref{equat}) holds, it follows that
$$
   j_3\leq \Big \lceil \frac{i_{2+n-k-r}}{n-k} \Big \rceil k\leq \Big \lceil \frac{i_{3+n-k-r}}{n-k} \Big \rceil
   k.
$$
Continuing, we see that, for $t\in \{ 1,\ldots ,w \}$,
$$
   j_t\leq \Big \lceil \frac{i_{t+n-k-r}}{n-k} \Big \rceil k.
$$
A final application of Lemma~\ref{B} gives that $(\mathcal{T}
_M)_{\bar j}^{\tilde i}$ is not trivially rank deficient. By
hypothesis, it must have full rank, which contradicts the hypothesis
that $(\mathcal{T} _M)_{\bar j}^{\tilde i}\bar u=0$. Consequently,
at most $w+n-k-r-1$ rows of $(\mathcal{T} _M)_{\bar j}^{\tilde i}$
are in the left kernel of $\bar u$.  It follows that $v$ has weight
at least $w + ((M + 1)(n - k) - (w + n - k - r - 1)) = M(n - k) + 1
+ r = (L + 1)(n - k) + 1 + r$, which means that $d_M^c(\mathcal{C})
\geq (L + 1)(n - k) + 1 + r$. Recalling Proposition
\ref{P-dcj.bound} and the definition of $r$, we conclude that
$d_M^c(\mathcal{C})= (L + 1)(n - k) + 1 + r$, so that $\mathcal{C}$
is sMDS.

$\Longrightarrow:$ We prove the contrapositive.  Suppose that the
matrix\eqr{Bl-To} has a $(w + n - k - r)\times w$ submatrix
$(\mathcal{T} _M)_{\bar j}^{\tilde
i}:=(\mathcal{T}_M)^{i_1,i_2,\ldots ,i_{w+n-k-r}}_{j_1,j_2,\ldots
,j_w}$ that is not trivially rank deficient and that has less than
full rank. There then exists a vector $(\mathcal{T} _M)_{\bar
j}^{\tilde i}\bar u\neq 0$ of weight $w'\leq w$ such that $\bar
u=0$.  Let
$$
u:=\left[
  \begin{array}{cccc}
  u_0^T & u_1^T &\cdots &  u_M^T
  \end{array}
\right]^T\in \F ^{Mk}
$$
be the vector in which the $j_t$th entry is the $t$th entry of $\bar
u$ and all other entries are zero; because of the block Toeplitz
structure of $\mathcal{T} _M$, we may assume that $u_0\neq 0$. Using
(\ref{iso}), we form the vector
$$
v:=\left[
  \begin{array}{ccccccccc}
  y_0^T &  y_1^T &  \cdots &  y_M^T  &   \vline&  u_0^T&
  u_1^T &
  \cdots &   u_M^T
  \end{array}
\right]^T.
$$
Because $[-I_{(M+1)(n-k)}\mid \mathcal{T} _M]v=0$, the weight of $v$
is at most $w' + (M + 1)(n - k) - (w - n - k - r)\leq (M + 1)(n - k)
- (n - k - r) = (L + 1)(n - k) + r<(L+1)(n-k)+r+1$. We may choose
additional information vectors $u_{M+1},\ldots ,u_{d}$ so that
$x_{d}=0$ (see, for example,~\cite{Antsaklis}); in other words, it
is possible to extend $v$ into a finite-weight sequence for
$(A,B,C,D)$ with weight less than the generalized Singleton bound.
Thus, $d_M^c(\mathcal{C})<(L+1)(n-k)+r+1$, so that $\mathcal{C}$ is
not an sMDS code.
\end{proof}

Theorem \ref{CharsMDS} gives polynomial conditions on the entries of
a realization $(A,B,C,D)\in S ^{\delta}_{k,n}$ that may be used to
determine whether or not the convolutional code these matrices
represent has the MDP and sMDS properties.  In the next section, we
use this information to show that we can find a realization
$(A,B,C,D)\in S ^{\delta}_{k,n}$ representing an $(n,k,\delta
)$-code that has the MDP and sMDS properties.

\section{Proof of the Existence of sMDS Convolutional Codes}
Recall that we defined the block matrices making up $\mathcal{T} _M$
in terms of matrices $(A,B,C,D)$.  In this section, we will work in
the opposite direction.  Let $\{ F_0,F_1,\ldots ,F_j \}$ be a
sequence of matrices in $\K ^{(n-k)\times k}$.  Slightly abusing
notation, we set
\begin{equation}                          \label{Bl-To2}
       \mathcal{T}_j  :=
 \left[
  \begin{array}{cccc}
  F_0           &        0           &\cdots    &0 \\
  F_1          &        F_0           &\ddots    &\vdots  \\
  \vdots      &        \vdots      &\ddots    &0  \\
  F_j  &        F_{j-1}  &\cdots    & F_0
  \end{array}
  \right].
\end{equation}
The plan is to show the existence of a sequence $\{ F_0,F_1,\ldots
,F_M \}$ of matrices in $\K ^{(n-k)\times k}$ such that
\begin{enumerate}
\item $\mathcal{T} _M$
has the property that, for all integers $l$ with $1\leq l\leq \min
\{(M+1)(n-k)-(n-k-r),(M+1)k \}$, every submatrix
$(\mathcal{T}_M)^{i_1,i_2,\ldots ,i_{l+n-k-r}}_{j_1,j_2,\ldots
,j_l}$ that is not trivially rank deficient has full rank;
\item there is a minimal partial realization
$(A,B,C,D)\in S ^{\delta}_{k,n}$ of this matrix sequence (this means
that $D=F_0$ and $CA^{i-1}B=F_i$ for $1\leq i\leq M$).
\end{enumerate}
The matrices $(A,B,C,D)$ will represent the desired code. We begin
with the following lemma.
\begin{lemma}\label{Ex1}
There exists a sequence $\{ F_0,F_1,\ldots ,F_L \}$ of matrices in
$\K ^{(n-k)\times k}$ such that every square submatrix of
$\mathcal{T} _L$ that is not trivially rank deficient has full rank.
\end{lemma}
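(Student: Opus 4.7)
The plan is a standard genericity argument over the algebraically closed field $\K$. Regard the entries of the as-yet unspecified matrices $F_0,F_1,\ldots,F_L$ as the indeterminates $x_1,\ldots,x_{(L+1)(n-k)k}$ of the polynomial ring $R$ (in the manner preceding Definition~\ref{trd}), so that $\mathcal{T}_L$ becomes a matrix with entries in $R$. By Definition~\ref{trd}, a square submatrix of $\mathcal{T}_L$ is trivially rank deficient precisely when its determinant is the zero element of $R$; equivalently, each square submatrix that is \emph{not} trivially rank deficient has a nonzero determinant in $R$.

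Since $\mathcal{T}_L$ has only finitely many square submatrices, I would enumerate those that are not trivially rank deficient as $M_1,\ldots,M_N$ and set $p_i:=\det(M_i)\in R$. Each $p_i$ is a nonzero element of the integral domain $R$, and therefore the product $P:=\prod_{i=1}^{N} p_i$ is also nonzero. Because $\K$ is algebraically closed and hence infinite, no nonzero polynomial in finitely many indeterminates over $\K$ vanishes on the entirety of $\K^{(L+1)(n-k)k}$. Choose a point at which $P$ does not vanish, and let $F_0,\ldots,F_L\in\K^{(n-k)\times k}$ be the matrices whose entries are read off from the coordinates of this point. At this specialization every $p_i$ evaluates to a nonzero element of $\K$, so every non-trivially-rank-deficient square submatrix of $\mathcal{T}_L$ has nonzero determinant, hence full rank.

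There is essentially no serious obstacle in the lemma itself: once one observes that the very definition of trivial rank deficiency coincides with the identical vanishing of the determinant polynomial, the claim reduces to the familiar fact that a nonzero polynomial in finitely many variables over an infinite field admits a nonvanishing point. The genuinely delicate work lies in the next step of the program described just before the lemma, namely extending such a sequence $\{F_0,\ldots,F_L\}$ to a longer sequence $\{F_0,\ldots,F_M\}$ that simultaneously satisfies the analogous non-trivial-rank-deficiency condition of Theorem~\ref{CharsMDS} at level $M$ and admits a \emph{minimal} partial realization $(A,B,C,D)\in S^{\delta}_{k,n}$; there the Markov-parameter and reachability/observability constraints interact and a purely generic choice of $F_{L+1},\ldots,F_M$ no longer suffices.
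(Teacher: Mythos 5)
Your proof is correct and follows the same genericity argument as the paper: identify entries with indeterminates, observe that non-trivially-rank-deficient square submatrices have nonzero determinants in the polynomial ring, and choose a common nonvanishing point (the paper phrases this as picking a point in the nonempty Zariski open complement of the finitely many determinantal hypersurfaces, while you multiply the determinants into one nonzero polynomial — an equivalent step).
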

\begin{proof}
  Note that we may think of such a matrix sequence $\{ F_0,F_1,\ldots ,F_L \}$ as a point in $\K ^{(L+1)(n-k)k}$.  To begin, think of the matrix (\ref{Bl-To2}) with $j=L$ as being defined over the polynomial ring $\K
  [x_1,x_2,\ldots,x_{(L+1)(n-k)k}]$, the entries corresponding with the indeterminates of this ring in a manner
  analogous to that in the previous section.  When viewed in this way, the determinant of a square submatrix of
  $\mathcal{T} _L$ that is not trivially rank deficient is a nonzero polynomial in $\K
  [x_1,x_2,\ldots,x_{(L+1)(n-k)k}]$, and there is a finite number of such polynomials.  The solution sets of these polynomials
  make up a proper algebraic subset of $\K ^{(L+1)(n-k)k}$, the complement of which is a nonempty Zariski open set.  Choose
  $\left \{ F_0,\ldots ,F_L \right \}$ to be a point in this open set.
\end{proof}

To determine the degree of a minimal partial realization of a matrix
sequence $\{ F_0,F_1,\ldots ,F_M \}$, we consider the matrices
$$
   \mathcal{F}_{x,y}:= \left[
      \begin{array}{cccc}
                      F_1    & F_2    & \cdots & F_y      \\
                      F_2    & F_3    & \cdots & F_{y+1}  \\
                      \vdots & \vdots &        & \vdots   \\
                      F_x    & F_{x+1}& \cdots & F_{x+y-1}
      \end{array}
   \right].
$$
In~\cite[Lemma 3]{te70}, it is shown that the degree of a minimal
partial realization of $\{ F_0,F_1,\ldots ,F_M \}$ is given by the
expression
\begin{equation}\label{rank}
\sum_{x =1}^{M} \text{rank } \mathcal{F} _{x ,M+1-x} - \sum_{x
=1}^{M-1} \text{rank } \mathcal{F} _{x ,M-x}.
\end{equation}
The next results show that, starting with a matrix sequence $\{
F_0,F_1,\ldots ,F_L \}$ as described in Lemma \ref{Ex1}, we can find
a matrix $F_M$ so that the expression (\ref{rank}) evaluates to
$\delta$.

\begin{lemma}\label{FM1}
Let $\left\{F_0,\ldots,F_M\right\}$ be a sequence of matrices in $\K
^{(n-k)\times k}$ such that every square submatrix of $
\mathcal{T}_L$ that is not trivially rank deficient has full rank.
Then,
\begin{enumerate}
\item For $x\in \{1,\ldots ,M-1 \}$, $\rank \mathcal{F} _{x ,M-x }=\min \{ x
(n-k),(M-x )k \}$.
\item If $\rank \mathcal{F}_{x
,M+1-x} <\min \{ x (n-k),(M+1-x )k \}$, then $x=\lceil M\frac{k}{n}
\rceil$. If $x\in \{1,\ldots ,M \} \backslash \{ \lceil M\frac{k}{n}
\rceil \}$, then $\rank \mathcal{F} _{x ,M+1-x }=\min \{ x
(n-k),(M+1-x ) k \}$.
\item Set $\bar x:=\lceil M\frac{k}{n} \rceil$. The expression
(\ref{rank}) reduces to $\rank \mathcal{F} _{\bar x,M+1-\bar x}$.
\end{enumerate}
\end{lemma}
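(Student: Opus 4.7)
My plan is to treat the three parts in order, using two tools: the trivial-rank-deficiency criterion of Lemma~\ref{B}, and the fact that each $\mathcal{F}_{x,y}$ with $x+y\le L+1$ embeds (after reversing block columns) as the submatrix of $\mathcal{T}_L$ occupying block rows $y+1,\ldots,y+x$ and block columns $1,\ldots,y$; the block identity $F_{(y+i)-(y+1-j)}=F_{i+j-1}$ makes this explicit. Since a column permutation preserves both rank and trivial rank deficiency of every square submatrix, the hypothesis on $\mathcal{T}_L$ transfers: every non-trivially-rank-deficient square submatrix of such an $\mathcal{F}_{x,y}$ has full rank.

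For Part~1, since $x+(M-x)=M=L+1$ the embedding applies, so only a lower bound on $\rank\mathcal{F}_{x,M-x}$ is needed. I will exhibit a non-trivially-rank-deficient square submatrix of size $\rho:=\min\{x(n-k),(M-x)k\}$: when $\rho=x(n-k)$ take all rows together with the first $\rho$ element columns, and when $\rho=(M-x)k$ take all columns together with the last $\rho$ element rows. In either case the selected row indices lie in block rows at least $M-x+1$ while the column indices lie in block columns at most $M-x$, so each pair $(i_t,j_t)$ falls in the strictly-lower-block-triangular (nonzero) part of $\mathcal{T}_L$, and the inequality $j_t\le\lceil i_t/(n-k)\rceil k$ from Lemma~\ref{B} is immediate.

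For Part~2, fix $x\ne\bar{x}$ in $\{1,\ldots,M\}$. If $x<\bar{x}$, then $x\le\bar{x}-1<Mk/n$ (since $\lceil y\rceil-1<y$ always), so $x(n-k)<(M-x)k$; Part~1 then gives $\rank\mathcal{F}_{x,M-x}=x(n-k)$, and since $\mathcal{F}_{x,M-x}$ is a submatrix of $\mathcal{F}_{x,M+1-x}$ this matches the maximum possible rank. If $x>\bar{x}$, then $x\ge\bar{x}+1\ge Mk/n+1$, which rearranges to $(x-1)(n-k)\ge(M+1-x)k$; the submatrix $\mathcal{F}_{x-1,M-(x-1)}$ formed from the first $x-1$ block rows of $\mathcal{F}_{x,M+1-x}$ thus has rank $(M+1-x)k$ by Part~1 (and $x\ge 2$ since $\bar{x}\ge 1$), again matching the maximum.

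For Part~3, substituting the rank values from Parts~1 and~2 into expression~(\ref{rank}) reduces the claim to the combinatorial identity
\[
\sum_{\substack{x=1\\x\ne\bar{x}}}^{M}\min\{x(n-k),(M+1-x)k\}\;=\;\sum_{x=1}^{M-1}\min\{x(n-k),(M-x)k\}.
\]
I would verify this by direct calculation, splitting according to $s:=Mk\bmod n$; the three cases $s=0$, $0<s<n-k$, and $n-k\le s<n$ simultaneously determine $\bar{x}$ and the branch-points of each $\min$, and in each case the two sides collapse to the same closed form. The main obstacle is the bookkeeping for this identity, together with keeping track of which integer $x$ falls into each case of Part~2, so that no $x\ne\bar{x}$ is missed.
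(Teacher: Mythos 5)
Your Parts 1 and 2 are correct and follow essentially the paper's route. Part 1 is the paper's one-line observation spelled out in detail: you embed $\mathcal{F}_{x,M-x}$ into the strictly lower block triangle of $\mathcal{T}_L$ and note that any square submatrix there automatically satisfies the non-triviality condition $j_t\leq\lceil i_t/(n-k)\rceil k$ of Lemma~\ref{B}. Part 2 establishes the contrapositive of what the paper proves (the paper assumes rank deficiency and derives $x=\bar x$; you take $x\ne\bar x$ and produce a full-rank square submatrix via Part 1); both directions are logically equivalent and your inequality manipulations $x\leq\bar x-1<Mk/n$ and $x-1\geq\bar x\geq Mk/n$ check out, as does the containment $\mathcal{F}_{x-1,M-(x-1)}\subset\mathcal{F}_{x,M+1-x}$.

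Part 3 is where you fall short: you correctly reduce the claim to the identity
\[
\sum_{\substack{x=1\\x\ne\bar x}}^{M}\min\{x(n-k),(M+1-x)k\}=\sum_{x=1}^{M-1}\min\{x(n-k),(M-x)k\},
\]
but then only sketch a verification plan (``I would verify this by direct calculation\ldots the main obstacle is the bookkeeping'') without carrying it out, so the proof is incomplete as written. The paper's two inequalities $x<\bar x\Rightarrow x(n-k)<(M-x)k$ and $x>\bar x\Rightarrow x(n-k)>(M+1-x)k$ make the identity transparent without any case split on $Mk\bmod n$: for $x<\bar x$ both sides contribute $x(n-k)$ at index $x$; for $x>\bar x$ the left side contributes $(M+1-x)k$, and reindexing by $x\mapsto x-1$ matches this to the right-side term $(M-(x-1))k$ at index $x-1\in\{\bar x,\ldots,M-1\}$, which indeed equals $\min\{(x-1)(n-k),(M-(x-1))k\}$ because $x-1\geq\bar x\geq Mk/n$. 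This bijective pairing of summands is cleaner and is what the paper intends by ``follows directly from the first two.'' (As a minor side remark, your case $s=0$ cannot occur under the paper's standing assumption $r\in\{1,\ldots,n-k-1\}$, since $Mk\bmod n=k-r'+r\in\{2,\ldots,n-1\}$.)
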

\begin{proof}
To verify the first claim, observe that $\mathcal{F} _{x ,M-x }$
differs by a column permutation from a submatrix of $\mathcal{T} _L$
that has full rank.

For the second claim, suppose first that $x(n-k)\leq (M+1-x)k$. The
hypothesis is then that $\rank \mathcal{F}_{x ,M+1-x } <x (n-k)$. If
$x <M$, it follows from 1 that $\rank \mathcal{F}_{x , M-x } =\min
\{ x (n-k),(M-x )k \}$, which means that $x (n-k)>(M-x )k$.
Together, this gives
$$
   (M-x )k<x (n-k)\leq (M+1-x )k
$$
(note that the first inequality also holds if $x =M$). This can be
rewritten as
$$
   M\frac{k}{n} <x \leq (M+1)\frac{k}{n}.
$$
If we suppose instead that $(M+1-x )k\leq x (n-k)$, similar
reasoning leads to
$$
   (M+1)\frac{k}{n} \leq x <M\frac{k}{n} +1.
$$
In all, we have
$$
  M\frac{k}{n} <x <M\frac{k}{n} +1.
$$
Since $x$ is an integer, $x=\lceil M\frac{k}{n} \rceil$.  The second
statement follows immediately.

The third claim follows directly from the first two, since $x<\bar x
\implies x(n-k)<(M-x)k$ and $x>\bar x \implies x(n-k)>(M+1-x)k$.
\end{proof}

\begin{theo}\label{FM3}
Let $\left\{F_0,\ldots ,F_L\right\}$ be a sequence of matrices in
$\K ^{(n-k)\times k}$ such that every square submatrix of
$\mathcal{T}_L $ that is not trivially rank deficient has full rank.
Then, one can find a matrix $F _M\in \K ^{(n-k)\times k}$ such that
\begin{enumerate}
\item the matrix
$$
   \mathcal{F}_{\bar x,M+1-\bar x}= \left[
      \begin{array}{cccc}
                      F_1           & F_2         & \cdots & F_{M+1-\bar x}  \\
                      F_2           & F_3         & \cdots & F_{M+2-\bar x}  \\
                      \vdots        & \vdots      &        & \vdots          \\
                      F_{\bar x}    & F_{\bar x +1}& \cdots & F_M
      \end{array}
   \right]
$$
has rank $\delta$.
\item the matrix $\mathcal{T}_M$ has the property that, for every
integer $l$ with $1\leq l\leq \min \{(M+1)(n-k)-(n-k-r),(M+1)k \}$,
every submatrix $(\mathcal{T}_M)^{i_1,i_2,\ldots
,i_{l+n-k-r}}_{j_1,j_2,\ldots ,j_l}$ that is not trivially rank
deficient has full rank.
\end{enumerate}
\end{theo}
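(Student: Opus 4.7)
The plan is to view the entries of $F_M$ as a variable point in $\K^{(n-k)k}$ and to show that both required properties hold for some choice of $F_M$, by reducing each to conditions on polynomials in those entries (with $F_0,\ldots,F_L$ fixed) and then combining the finitely many constraints. Since $\K$ is algebraically closed and hence infinite, finite intersections of non-empty Zariski-open subsets of $\K^{(n-k)k}$ are non-empty, which is the overarching tool.

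First I would reduce the full-rank assertion in condition~(2) to a square-submatrix assertion via Lemma~\ref{B}: a non-trivially rank deficient $(l+n-k-r)\times l$ submatrix of $\mathcal{T}_M$ contains a non-trivially rank deficient $l\times l$ sub-submatrix, and full column rank of the former follows from non-singularity of the latter. So it suffices to show that every square non-trivially rank deficient submatrix $S$ of $\mathcal{T}_M$ is non-singular for an appropriate $F_M$. I would split these $S$ into two classes. If $\det S$ does not depend on the entries of $F_M$, then $S$ is either contained in $\mathcal{T}_L$ directly, or, by the block Toeplitz structure of $\mathcal{T}_M$ (deleting its first block column yields a matrix whose only non-zero block rows form $\mathcal{T}_L$), $S$ can be identified with a submatrix of $\mathcal{T}_L$ on which the trivial rank deficiency criterion of Lemma~\ref{B} is preserved; in either subcase the hypothesis gives $\det S\neq 0$. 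If $\det S$ does depend on the entries of $F_M$, a Laplace expansion of $\det S$ along the rows of $S$ in the last block row of $\mathcal{T}_M$ and the columns of $S$ in the first block column isolates a term whose coefficient is the determinant of a complementary square submatrix of $\mathcal{T}_M$ not involving $F_M$; verifying via Lemma~\ref{B} that this complementary submatrix is itself non-trivially rank deficient reduces to the previous subcase and shows that $\det S$ is a non-zero polynomial in the entries of $F_M$, so that a generic $F_M$ avoids its vanishing locus.

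For condition~(1), Lemma~\ref{FM1} reduces the McMillan-degree expression~(\ref{rank}) to $\rank \mathcal{F}_{\bar x,M+1-\bar x}$, and the entries of $F_M$ appear only in the bottom-right block of $\mathcal{F}_{\bar x,M+1-\bar x}$. The sub-Hankel matrix $\mathcal{F}_{\bar x, M-\bar x}$ obtained by deleting the last block column has rank $\min\{\bar x(n-k), (M-\bar x)k\}$ by Lemma~\ref{FM1}(1) and so is determined by $F_0,\ldots,F_L$ alone. I would then show that one can choose $F_M$ so that the adjoined block column raises the total rank to exactly $\delta$: this is a prescribed-rank problem for the affine-linear family of matrices parametrized by the entries of $F_M$, addressed by exhibiting (via direct linear algebra) a specific $F_M$ placing the rank at the target value, using the arithmetic relationships between $\delta$, $L$, $M$, and $\bar x$ to guarantee compatibility. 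The rank-at-least-$\delta$ locus is open and, by this construction, non-empty, and its non-empty intersection with the open conditions coming from~(2) then yields an $F_M$ satisfying both requirements simultaneously.

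The hardest step will be the combinatorial verification in the $F_M$-dependent case of~(2): showing that, after the Laplace expansion, the complementary submatrix of $\mathcal{T}_M$ inherits the non-trivial rank deficiency failure from $S$, so that the isolated coefficient is non-zero and the full determinant is not the zero polynomial in the entries of $F_M$. The remaining steps amount to routine index bookkeeping with the block Toeplitz structure and standard Zariski-density arguments.
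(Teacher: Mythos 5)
Your route differs from the paper's in two respects, one benign and one that leaves a genuine gap.

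For condition~(2), you propose to show that \emph{every} square non-trivially-rank-deficient submatrix of $\mathcal{T}_M$ is non-singular, splitting these by whether their determinant depends on $F_M$ and using a Laplace expansion in the $F_M$-dependent case to certify a nonzero polynomial, so that a generic $F_M$ works. This is a different route from the paper's: there, for each $(l+n-k-r)\times l$ submatrix, one shows the top $l\times l$ piece (which lies in the top $M(n-k)+r$ rows of $\mathcal{T}_M$) is trivially rank deficient, extracts the minimal offending index $t$ from Lemma~\ref{B}, and exhibits an $l\times l$ block-lower-triangular sub-submatrix whose diagonal blocks live in the top $M(n-k)+r$ rows and in the bottom-right shifted copy of $\mathcal{T}_L$ (using $j_t>k$ and the block Toeplitz structure). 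Your version is plausible as far as it goes, but it establishes a genericity statement about all of $F_M$, and that is precisely what cannot be reconciled with condition~(1).

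The gap is in your final combining step. The locus $\{F_M:\rank\mathcal{F}_{\bar x,M+1-\bar x}\ge\delta\}$ is Zariski-open, but condition~(1) asks for $\rank\mathcal{F}_{\bar x,M+1-\bar x}=\delta$, and the upper bound $\rank\le\delta$ is a \emph{closed} condition. Intersecting the open rank-at-least-$\delta$ locus with the open conditions from~(2) yields $F_M$ with $\rank\ge\delta$, not $\rank=\delta$; in fact, since $\delta=(\bar x-1)(n-k)+r<\bar x(n-k)$ and $(M+1-\bar x)k=\delta+(k-r')>\delta$, a generic $F_M$ gives $\rank\mathcal{F}_{\bar x,M+1-\bar x}=\min\{\bar x(n-k),(M+1-\bar x)k\}>\delta$, so the open sets you intersect contain no valid $F_M$ at all. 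The paper dissolves this tension by splitting $F_M$ into its top $r$ rows and bottom $n-k-r$ rows: the top $r$ rows are chosen generically so that every square non-trivially-rank-deficient submatrix of the top $M(n-k)+r$ rows of $\mathcal{T}_M$ has full rank (this already forces $\rank\mathcal{F}_{\bar x,M+1-\bar x}\ge\delta$, since $\delta<(M+1-\bar x)k$); the bottom $n-k-r$ rows of $F_M$ are then \emph{forced} to be the particular linear combinations that extend the linear dependence already present among the bottom rows of $\mathcal{F}_{\bar x,M-\bar x}$, capping the rank at $\delta$. What makes this split legal is the structural observation underpinning the paper's proof of~(2): in each constrained submatrix, full rank is certified by a block-lower-triangular $l\times l$ piece whose diagonal blocks sit in the top $M(n-k)+r$ rows of $\mathcal{T}_M$ and in the bottom-right copy of $\mathcal{T}_L$, neither of which involves the bottom $n-k-r$ rows of $F_M$. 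Your decomposition by whether $\det S$ involves $F_M$ does not localize the dependence to the top $r$ rows, and without that localization conditions~(1) and~(2) remain in direct conflict over the same degrees of freedom.
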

\begin{proof}
We may write
$$
\delta =\Big\lfloor \frac{\delta}{n-k} \Big\rfloor
(n-k)+r=\Big\lfloor \frac{\delta}{k} \Big\rfloor k+r',
$$
where $1\leq r<n-k$ and $0\leq r'<k$.  Since
$$
M=L+1=\Big\lfloor \frac{\delta}{n-k} \Big\rfloor +\Big\lfloor
\frac{\delta}{k} \Big\rfloor +1,
$$
we see that
\begin{align*}
\frac{Mk}{n}&=\Big\lfloor \frac{\delta}{n-k} \Big\rfloor \frac{k}{n}
+\Big\lfloor \frac{\delta}{k} \Big\rfloor \frac{k}{n} +\frac{k}{n}
=\Big\lfloor \frac{\delta}{n-k} \Big\rfloor -\Big\lfloor
\frac{\delta}{n-k} \Big\rfloor \frac{n-k}{n} +\Big\lfloor
\frac{\delta}{k} \Big\rfloor \frac{k}{n} +\frac{k}{n}\\
&=\Big\lfloor \frac{\delta}{n-k} \Big\rfloor -\frac{\delta -r}{n}
+\frac{\delta -r'}{n} +\frac{k}{n} =\Big\lfloor \frac{\delta}{n-k}
\Big\rfloor +\frac{k-r'+r}{n}.
\end{align*}
Since $1<k-r'+r<n$, we have $\lfloor \frac{\delta}{n-k} \rfloor
=\bar x -1$, so that $\delta =(\bar x -1)(n-k)+r$ and
$$
\frac{Mk}{n} =\bar x -1+\frac{k-r'+r}{n}.
$$
Multiplying both sides by $n$ and subtracting $\bar x k$ from both
sides, we get
$$
(M-\bar x )k=(\bar x -1)(n-k)+r-r'=\delta -r',
$$
from which it follows that $(M-\bar x )k\leq \delta$.  Since $r'
<k$, it also follows that $\delta <(M+1-\bar x )k$.

We next want to see that we may find a matrix $F_M$ as described in
the statement of the theorem.  We first consider the top $r$ rows of
$F_M$.  Using the same reasoning as in the proof of Lemma \ref{Ex1},
we may find elements of $\K$ to form these top $r$ rows so that all
square submatrices of the top $M(n-k)+r$ rows of $\mathcal{T} _M$
that are not trivially rank deficient have full rank.  In
particular, all square submatrices of the top $\delta$ rows of
$\mathcal{F}_{\bar x,M+1-\bar x}$ have full rank.  Denote the
$r\times k$ matrix consisting of these $r$ rows by $F_M'$.  Since
$\delta <(M+1-\bar x )k$, $\text{rank } \mathcal{F} _{\bar x
,M+1-\bar x} \geq
 \delta$ will hold regardless of how the entries of the bottom $n-k-r$ rows of $F_M$ are chosen.  To find entries for these rows
 so that $\rank \mathcal{F}_{\bar x,M+1-\bar x} =\delta$, consider the top
$\delta$ rows of $\mathcal{F} _{\bar x ,M-\bar x}$. Since $\delta
\geq (M-\bar x )k$, we may choose $M-\bar x$ of these $\delta$ rows
to form an $(M-\bar x )k\times (M-\bar x )k$ submatrix that
necessarily has full rank. This means that the last $n-k-r$ rows of
$\mathcal{F} _{\bar x ,M-\bar x }$ may each be expressed as a linear
combination of the rows of our chosen submatrix. Consequently, we
may take the last $n-k-r$ rows of $F_M$ to be the corresponding
linear combinations of the rows of
$$
   \left[
      \begin{array}{c}
                      F_{M+1-\bar x}  \\
                      F_{M+2-\bar x}  \\
                      \vdots          \\
                      F_M'
      \end{array}
   \right]
$$
extending the rows of our chosen submatrix.  With this, we have
found an $F_M$ so that $\text{rank } \mathcal{F} _{\bar x ,M+1-\bar
x } =\delta$.

Suppose finally that $(\mathcal{T} _M)_{\bar j}^{\bar
i}:=(\mathcal{T}_M)^{i_1,i_2,\ldots ,i_{l+n-k-r}}_{j_1,j_2,\ldots
,j_l}$ is a submatrix of $\mathcal{T}_M$ that is not trivially rank
deficient and does not have full rank.  Then, in particular,
$(\mathcal{T}_M)^{i_1,i_2,\ldots ,i_l}_{j_1,j_2,\ldots ,j_l}$ does
not have full rank.  Since $(\mathcal{T}_M)^{i_1,i_2,\ldots
,i_l}_{j_1,j_2,\ldots ,j_l}$ is contained in the top $M(n-k)+r$ rows
of $\mathcal{T} _M$, it must be trivially rank deficient. By Lemma
\ref{B}, there exists a smallest integer $t\in \{ 1,\ldots l \}$
such that
$$
 j_t>\Big\lceil \frac{i_t}{n-k} \Big\rceil k.
$$
Since $(\mathcal{T} _M)_{\bar j}^{\bar i}$ is not trivially rank
deficient, it also follows from Lemma \ref{B} that
$$
   j_{\tau}\leq\Big\lceil\frac{i_{\tau +n-k-r}}{n-k}\Big\rceil k \,\,\, \forall \,\tau \in
   \{1,\ldots ,l \},
$$
so that $(\mathcal{T} _M)_{\bar j}^{\tilde i}:=(\mathcal{T}
_M)^{i_{t+n-k-r},i_{t+1+n-k-r},\ldots
,i_{l+n-k-r}}_{j_t,j_{t+1},\ldots ,j_l}$ is not trivially rank
deficient.  Since $j_t>k$, $(\mathcal{T} _M)_{\bar j}^{\tilde i}$
must be a submatrix of $\mathcal{T} _L$.  Thus, $(\mathcal{T}
_M)_{\bar j}^{\tilde i}$ has full rank.  Recalling how $t$ was
chosen, we conclude that $(\mathcal{T} _M)_{\bar j}^{\bar i}$ has
full rank. This is a contradiction. We conclude that, if a submatrix
$(\mathcal{T} _M)_{\bar j}^{\bar i}$ is not trivially rank
deficient, then it has full rank.
\end{proof}

\begin{cor}\label{FM4}
Let $\left\{F_0,\ldots ,F_M\right\}$ be as in Theorem~\ref{FM3}.
Then, the expression (\ref{rank}) evaluates to $\delta$.
\end{cor}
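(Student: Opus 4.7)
The plan is to derive Corollary \ref{FM4} directly from Lemma \ref{FM1}(3) and Theorem \ref{FM3}(1); essentially no new work is needed, because all the analysis of the ranks $\text{rank } \mathcal{F}_{x,M-x}$ and $\text{rank } \mathcal{F}_{x,M+1-x}$ has been carried out already. The reason the corollary is stated separately is its significance for the overall argument: combined with the formula (\ref{rank}) of Tether, it forces any minimal partial realization of $\{F_0,\ldots,F_M\}$ to have McMillan degree exactly $\delta$, which is precisely the degree needed for the realization to represent an $(n,k,\delta)$-code.

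The first step is to check that the hypothesis of Lemma \ref{FM1} is satisfied by the sequence $\{F_0,\ldots,F_M\}$ produced in Theorem \ref{FM3}. This hypothesis concerns only the block matrix $\mathcal{T}_L$, whose entries are the blocks $F_0,\ldots,F_L$ and do not involve $F_M$. Since those $L+1$ matrices are the ones originally produced by Lemma \ref{Ex1}, and Theorem \ref{FM3} only adjoins a new matrix $F_M$ without altering $F_0,\ldots,F_L$, the property that every non-trivially-rank-deficient square submatrix of $\mathcal{T}_L$ has full rank is inherited by $\{F_0,\ldots,F_M\}$ verbatim.

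Applying Lemma \ref{FM1}(3) then collapses the alternating sum (\ref{rank}) to the single quantity $\text{rank } \mathcal{F}_{\bar x,M+1-\bar x}$, where $\bar x = \lceil Mk/n\rceil$. Part 1 of Theorem \ref{FM3} was arranged precisely so that the constructed $F_M$ makes $\text{rank } \mathcal{F}_{\bar x,M+1-\bar x}=\delta$. Chaining these two identities yields the conclusion. There is no obstacle to overcome: the telescoping identity of Lemma \ref{FM1} and the rank computation built into Theorem \ref{FM3} together render the corollary immediate.
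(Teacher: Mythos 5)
Your proposal is correct and matches the paper's intent exactly: the paper states Corollary~\ref{FM4} without a separate proof precisely because it is immediate from Lemma~\ref{FM1}(3) (which collapses the alternating sum in (\ref{rank}) to $\rank \mathcal{F}_{\bar x,M+1-\bar x}$, its hypothesis on $\mathcal{T}_L$ being inherited since $\mathcal{T}_L$ involves only $F_0,\ldots,F_L$) and Theorem~\ref{FM3}(1) (which makes that rank equal $\delta$). Your reconstruction is the intended argument.
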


We are now ready to finish our existence proof.

\begin{theo}\label{Ex4}
An MDP and sMDS $(n,k,\delta )$-code exists over a sufficiently
large finite field.
\end{theo}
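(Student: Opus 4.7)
The plan is to assemble the pieces prepared in Lemmas \ref{Ex1}--\ref{FM1}, Theorem \ref{FM3}, and Corollary \ref{FM4}, and then invoke Theorem \ref{CharsMDS} to read off the two distance properties from a suitable realization. First I apply Lemma \ref{Ex1} to obtain a sequence $\{F_0,F_1,\ldots ,F_L\}\subset\K^{(n-k)\times k}$ such that every non-trivially-rank-deficient square submatrix of the block Toeplitz matrix $\mathcal{T}_L$ built from these blocks has full rank. Feeding this sequence into Theorem \ref{FM3}, I choose a further block $F_M\in\K^{(n-k)\times k}$ so that the extended sequence $\{F_0,F_1,\ldots ,F_M\}$ has two properties simultaneously: the Hankel-type matrix $\mathcal{F}_{\bar x,M+1-\bar x}$ has rank exactly $\delta$, and every submatrix $(\mathcal{T}_M)^{i_1,\ldots ,i_{l+n-k-r}}_{j_1,\ldots ,j_l}$ of $\mathcal{T}_M$ that is not trivially rank deficient has full rank, for each admissible $l$.

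Next I turn this sequence into a realization. By Corollary \ref{FM4}, combined with the rank formula \eqr{rank} from \cite{te70}, a minimal partial realization of $\{F_0,F_1,\ldots ,F_M\}$ has McMillan degree exactly $\delta$. Using standard partial realization theory I therefore obtain a quadruple $(A,B,C,D)$ with $D=F_0$ and $CA^{i-1}B=F_i$ for $1\leq i\leq M$, and with $(A,B)$ reachable and $(A,C)$ observable; hence $(A,B,C,D)\in S^{\delta}_{k,n}$. Because this quadruple has only finitely many entries, all of them lie in a finite subfield $\F$ of $\K$, so the realization can be regarded as a realization over $\F$.

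Let $\overline{\mathcal{C}}$ be the $(n,k,\delta)$-code represented by $(A,B,C,D)$ in the sense of Section 2. By construction, the block Toeplitz matrix associated with $(A,B,C,D)$ as in \eqr{Bl-To} is precisely $\mathcal{T}_M$ (and its truncation is $\mathcal{T}_L$), so the rank conditions supplied by Theorem \ref{FM3} are exactly the hypotheses required by Theorem \ref{CharsMDS}. That theorem then forces $\overline{\mathcal{C}}$ to be both MDP and sMDS, completing the proof.

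The genuinely substantive step in this argument has already been done inside Theorem \ref{FM3}: one has to choose the last block $F_M$ so that it simultaneously realizes the partial realization rank $\delta$ and preserves the non-triviality-of-rank-deficiency condition that was obtained generically for $\mathcal{T}_L$. The assembly described above is then a straightforward combination of that construction with the partial realization theorem of \cite{te70} and the characterization in Theorem \ref{CharsMDS}; the descent to a finite field is automatic because only finitely many field elements are needed.
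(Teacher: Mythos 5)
Your proposal is correct and follows essentially the same route as the paper: combine Lemma~\ref{Ex1} and Theorem~\ref{FM3} to produce the partial Markov sequence $\{F_0,\ldots,F_M\}$ with the required non-trivial-rank-deficiency properties, use Corollary~\ref{FM4} together with Tether's rank formula to see that the minimal partial realization has McMillan degree $\delta$, realize it over the finite subfield generated by the finitely many entries, and invoke Theorem~\ref{CharsMDS}. You spell out a couple of intermediate steps the paper leaves implicit (notably that the block Toeplitz matrix built from $(A,B,C,D)$ coincides with $\mathcal{T}_M$ built from $\{F_i\}$ because of the partial realization equations), but the argument is the same.
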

\begin{proof}
By Lemma \ref{Ex1} and Theorem~\ref{FM3}, we can find a sequence
$\left\{F_0,\ldots ,F_M\right\}$ of matrices in $\K ^{(n-k)\times
k}$ such that
\begin{enumerate}
\item every square submatrix of $ \mathcal{T}_L$ that is not
trivially rank deficient has full rank
\item every $(l+n-k-r)\times l$
submatrix of $\mathcal{T}_M$ that is not trivially rank deficient
has full rank
\item the minimum possible degree of a partial realization of
$\left\{F_0,\ldots ,F_M\right\}$ is $\delta$.
\end{enumerate}
Since there are a finite number of entries in the matrices
$\left\{F_0,\ldots ,F_M\right\}$, the entries all belong to some
finite subfield $\F$ of $\K$.  From~\cite[Theorem 1]{te70}, there is
a minimal realization $(A,B,C,D)\in S ^{\delta}_{k,n}$ of the
sequence $\left\{F_0,\ldots,F_M\right\}$ with entries in $\F$. By
Theorem~\ref{CharsMDS}, the $(n,k,\delta )$-code represented by
$(A,B,C,D)$ is both MDP and sMDS.
\end{proof}

With this, we have shown that the conjecture in~\cite{gl03r} that
codes having both the MDP and sMDS properties exist for all
parameters $(n,k,\delta )$ is correct.  It is still an open problem
as to how one may construct matrices of the form (3.4) leading to
codes with these properties, and this must be
left for future research.\\

\noindent {\bf Acknowledgements:}  The author wishes to thank
Joachim Rosenthal, Heide Gluesing-Luerssen, and Jos\'e Ignacio
Iglesias Curto for helpful comments during the preparation of this
paper.  He also wishes to thank the anonymous referees for their
careful readings and detailed comments.

\end{document}